\title{Existence and regularity results for Fully Non Linear Operators on the model of  the  pseudo  Pucci's operators  } 
\author{I. Birindelli, F. Demengel}
\date{}
\newtheorem{theo}{Theorem}[section]
\newtheorem{prop}[theo]{Proposition}
\newtheorem{rema}[theo]{Remark}
\newtheorem{cor}[theo]{Corollary}
\newtheorem{lemme}[theo]{Lemma}
\def\R{\mathbb  R}
\def\grad{\nabla}
\DeclareMathOperator*{\Lip}{Lip}
\begin{document}
\maketitle

\section{Introduction} 
This paper is devoted  to the existence and regularity of viscosity solutions for a  class of degenerate operators, on the model of the pseudo $p$-Laplacian. 

Recall that  the pseudo-$p$-Laplacian, for $p>1$ is defined by: 
$$
\tilde{\Delta}_p u:=  \sum_1^N \partial_i ( |\partial_i u|^{p-2} \partial_i u).
$$
When $p>2$, it  is degenerate elliptic at any point where even only  one derivative 
$\partial_i u$ is zero. 

Using classical methods in the calculus of variations,  equation  
\begin{equation} \label{eq1}
  \tilde{\Delta}_p u=(p-1)f
\end{equation}
has solutions in $W^{1,p}$ ,  when for example $f\in L^{p^\prime}$. The 
regularity results  are obtained through specific variational technics,  see \cite{T}, \cite{DB}. When $p<2$, Lipschitz regularity is a consequence of  \cite{FFM}. 

When $p>2$ things are more delicate.   
Note that in \cite{BC}, for some fixed  non negative numbers $\delta_i$,
the following widely degenerate equation was considered
   \begin{equation}
   \label{eq2}\sum_i \partial_i((|\partial_i u|-\delta_i)_+^{p-1} {\partial_i u \over |\partial_i u|} ) = (p-1) f.
   \end{equation} 
The authors proved that the  solutions of (\ref{eq2}) are in $W^{1,q}_{loc}$ when $f\in L^\infty_{loc}$. As a consequence, 
by the Sobolev Morrey's imbedding, the solutions are H\"older's continuous for any exponent $\gamma <1$. 
   
    The Lipschitz  interior regularity   for (\ref{eq1}) has been very recently proved by the  
second author in \cite{D}. The regularity obtained concerns Lipschitz continuity for 
viscosity solutions. Since weak solutions are viscosity solutions, (see also 
\cite{BK}), she  obtains  Lipschitz continuity for  weak solutions  when the forcing term 
is in  $L^\infty_{loc}$.
      
At  the same time,  in \cite{BBJ},  the local 
Lipschitz  regularity of the solutions of (\ref{eq2}) has been proved
when either $N = 2$,  $p\geq 2$  and $f\in W^{1,p^\prime}_{loc}$ or  $N\geq 3$, $p\geq 4$, and $f\in W^{1,\infty}_{loc}$. Remark that  (\ref{eq2}) can also be written 
formally as 
       $$\sum_i (|\partial_i u|-\delta_i)_+^{p-2} \partial_{ii} u =  f.$$
Hence         viscosity solutions have an obvious definition, and 
  with   the methods employed in \cite{D}, one can prove,  in 
particular,  that the solutions are H\"older's continuous for any exponent $\gamma <1$. Unfortunately  the Lipschitz continuity for   viscosity solution of (\ref{eq2}) cannot be obtained in the same way. 
        
Let us state the precise assumptions that hold in this paper  and present our main result.         
Fix $\alpha >0$, and                   
for any $q\in\R^N$ let  $\Theta_\alpha(q) $ be  the diagonal matrix with  entries 
$|q_i|^{\frac{\alpha}{2}}$ on the diagonal,  and let $X$ be a symmetric matrix.

\medskip

Let $F$ be defined on $\R^N \times \R^N\times S$,  continuous in all its arguments, which satisfies   $F(x, 0, M) = F(x, p, 0) = 0$ and 

(H1)  For any $M\in S$ and $N \in S$, $N\geq 0$, for any $x\in \overline{\Omega}$
  \begin{equation}\label{degenel}
    \lambda tr( \Theta _\alpha (q)N \Theta_\alpha (q))   \leq F(x, q, M+ N)-F(x, q,M) \leq    \Lambda tr(\Theta _\alpha (q)N \Theta_\alpha (q)) 
    \end{equation}
    
(H2) There exist $\gamma_F\in ]0,1]$ and $c_{\gamma_F}>0$ such that for any $(q, X)\in \R^N \times S$ 
 \begin{equation}\label{gammaF}
  |F(x, q, X)-F(y, q, X) | \leq  c_{\gamma_F} |x-y|^{\gamma_F} |q|^\alpha |X|
  \end{equation} 
(H3) There exists $\omega_F$ a continuous function on $\R^+$ such that $\omega_F(0)=0$, and 
  as soon as  $(X,Y)$ satisfy for some $m>0$
  \begin{equation}\label{eqXY}-m \left(\begin{array}{cc}
 {\rm I}&0\\
 0& {\rm I}\end{array}\right) \leq \left(\begin{array}{cc}
 X&0\\
 0& Y\end{array} \right)\leq m \left( \begin{array}{cc}{\rm I}&-{\rm I}\\
 -{\rm I}& {\rm I}\end{array} \right)
 \end{equation}
 then 

 \begin{equation}\label{depenx}F(x,m(x-y), X) -F(x, m (x-y),  Y) \leq \omega_F (m |x-y|^{\alpha+2\over {\alpha+1}} ) + o(m |x-y|^{\alpha+2\over {\alpha+1}} )\end{equation}
when $m $ goes to infinity.
  
 (H4) There exists $c_{F}$ such that
    for any $p,q\in \R^N$, for all $x\in \R^N$, $X \in S$
    \begin{equation} \label{diffp}
    |F(x,  p, X)-F(x, q, X)| \leq c_F ||p|^\alpha -|q|^\alpha | | X|
    \end{equation}

Note that the pseudo - Pucci's operators, for $0< \lambda < \Lambda$ 
 \begin{eqnarray*}
  {{\cal M}_\alpha ^+} (q, X) &=&\Lambda tr( (\Theta_\alpha(q)  X \Theta_\alpha (q))^+) - \lambda tr(( \Theta _\alpha (q)X \Theta_\alpha (q))^-) \\
  &=& \sup_{\lambda I\leq A\leq \Lambda I} tr( A  \Theta_\alpha(q)  X \Theta_\alpha(q) ).
  \end{eqnarray*}
and  
   $${{\cal M}_\alpha ^-} (q, X) = -{{\cal M}_\alpha ^+} (q, -X)$$
satisfy all the assumptions above.
     
   We will also consider  equations with lower order terms. Precisely,
let $h$ defined on $\R^N\times \R^N$ ,  continuous with respect to its arguments, which satisfies on any bounded domain $\Omega$ 
  \begin{equation}\label{H}
|h(x, q)|\leq c_{h, \Omega}  (|q|^{1+\alpha}  +1)
  \end{equation}
  
Our main result is the following.
\begin{theo}\label{lip} Let $\Omega$ be a bounded domain and $f$ be continuous and bounded
in $\Omega$. Under the conditions (\ref{depenx}),  (\ref{degenel}), (\ref{gammaF}), (\ref{diffp}) and (\ref{H}), let $u$ be  a  solution of
\begin{equation} \label{equation}
F(x,\grad u, D^2u)+h(x,\grad u)=f \quad\mbox{in }\ \Omega.
\end{equation}
 Then, for  any $\Omega^\prime \subset \subset \Omega$ ,
there exists $C_{\Omega^\prime}$,  such that   
for   any $(x,y)\in \Omega^\prime$ 
$$|u(x) -u(y)|\leq C_{\Omega^\prime}  |x-y|.$$
\end{theo}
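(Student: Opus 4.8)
The plan is to establish the Lipschitz estimate by the classical Ishii–Lions doubling-of-variables argument adapted to this degenerate setting, following the strategy used in \cite{D}. Fix $\Omega'\subset\subset\Omega''\subset\subset\Omega$ and, up to subtracting a constant, assume $u$ is bounded. The goal is to show that for a suitable choice of constants $L$ and $M$ large, the function
$$\Phi(x,y) = u(x) - u(y) - L\,\omega(|x-y|) - M\bigl(|x-x_0|^2 + |y-x_0|^2\bigr)$$
is non-positive on $\overline{\Omega''}\times\overline{\Omega''}$, where $\omega(s) = s - \omega_0 s^{1+\beta}$ for a small $\beta>0$ is the standard concave test-function modulus (strictly concave, increasing on the relevant range), and $x_0$ is the center of a ball around which we localize. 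The quadratic penalization with $M$ forces the maximum of $\Phi$ to be interior when $L$ is large relative to $\mathrm{osc}\,u$; arguing by contradiction, assume the max is positive and attained at an interior point $(\bar x,\bar y)$ with $\bar x\neq\bar y$.

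At such a point one applies the Theorem on sums (Jensen–Ishii's lemma) to produce symmetric matrices $X,Y$ with $(X,-Y)$ controlled by the Hessian of the test function, i.e. satisfying an inequality of the form \eqref{eqXY} with $m\sim L\omega'(|\bar x-\bar y|)/|\bar x-\bar y|$ plus the $O(M)$ correction, and gradients $q_x = L\omega'(|\bar x-\bar y|)\hat e + O(M|\bar x-x_0|)$, $q_y = L\omega'(|\bar x-\bar y|)\hat e + O(M|\bar y-x_0|)$ where $\hat e = (\bar x-\bar y)/|\bar x-\bar y|$. Writing the two viscosity inequalities and subtracting, one gets
$$F(\bar x,q_x,X) - F(\bar x,q_y,Y) \le f(\bar x)-f(\bar y) + |h(\bar y,q_y)-h(\bar x,q_x)| + (\text{terms from }M).$$
Now (H4), i.e. \eqref{diffp}, lets us replace $q_y$ by $q_x$ inside $F$ up to an error $c_F\bigl||q_x|^\alpha-|q_y|^\alpha\bigr||X|$, which is lower order because $|q_x|-|q_y| = O(M)$ while $|q_x|\sim L$; then (H3), i.e. \eqref{depenx}, bounds $F(\bar x,q_x,X)-F(\bar x,q_x,Y)$ by $\omega_F(m|\bar x-\bar y|^{(\alpha+2)/(\alpha+1)}) + o(\cdots)$. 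The crucial gain must come from the ellipticity (H1), \eqref{degenel}: along the direction $\hat e$ the matrix $\Theta_\alpha(q_x)$ has entry $\sim L^{\alpha/2}$, and the concavity of $\omega$ makes the relevant eigenvalue of $X-Y$ in that direction of size $\sim L|\omega''(|\bar x-\bar y|)| = L\omega_0\beta(1+\beta)|\bar x-\bar y|^{\beta-1}$, producing a strictly negative contribution of order $\lambda L^{1+\alpha}|\bar x-\bar y|^{\beta-1}$ to $F(\bar x,q_x,X)-F(\bar x,q_x,Y)$. The lower order term $h$ is absorbed using \eqref{H}: $|h(\bar x,q_x)|+|h(\bar y,q_y)| \lesssim c_h(L^{1+\alpha}+1)$, which is of the same order $L^{1+\alpha}$ but \emph{without} the blow-up factor $|\bar x-\bar y|^{\beta-1}$, so for $|\bar x-\bar y|$ small it is dominated; if $|\bar x-\bar y|$ is not small the estimate is trivial since then $u(\bar x)-u(\bar y)\le \mathrm{osc}\,u$ is already controlled by $L|\bar x-\bar y|$. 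Balancing all these terms and choosing $\omega_0$, then $L$, then $M$ appropriately yields the contradiction $0 < \Phi(\bar x,\bar y)\le 0$.

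The main obstacle, and the point requiring the most care, is controlling the \emph{degeneracy} of $\Theta_\alpha$: the ellipticity estimate in (H1) is only useful when the gradient $q_x$ is genuinely large in the relevant direction, so one must verify that $L\omega'$ indeed dominates the $O(M)$ perturbation from the quadratic term, i.e. that $|q_x|\sim L$ and the direction $\hat e$ is not nearly annihilated by $\Theta_\alpha(q_x)$ — since $q_x$ is close to $L\hat e$, the entry $|q_{x,i}|^{\alpha/2}$ in the dominant coordinate is of order $L^{\alpha/2}|\hat e_i|^{\alpha/2}$, which can still be small if $\hat e$ is nearly axis-parallel in other coordinates, so one has to argue on the full quadratic form $tr(\Theta_\alpha(q_x)(X-Y)\Theta_\alpha(q_x))$ rather than a single eigenvalue, exploiting that the Hessian of $\omega(|x-y|)$ restricted to $\hat e$ is the $\omega''$ term and using that $|\hat e|=1$ forces at least one coordinate to have $|\hat e_i|\ge N^{-1/2}$. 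A secondary technical point is justifying the localization so that the comparison is only needed on $\Omega''$ and the constant $C_{\Omega'}$ depends only on $\mathrm{dist}(\Omega',\partial\Omega)$, $\|u\|_\infty$, $\|f\|_\infty$ and the structural constants; this is routine once the one-ball estimate is in hand and is handled by a standard covering argument.
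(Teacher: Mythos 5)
Your overall strategy---Ishii--Lions doubling with a concave radial test function $\omega$, quadratic localization, Jensen--Ishii's lemma to produce $(q_x,X)$, $(q_y,-Y)$, extraction of a large negative eigenvalue from the concavity of $\omega$, then absorbing the $x$-dependence (H2)/(H3), the gradient dependence (H4) and the lower-order term $h$ as lower-order errors---is exactly the approach the paper takes in Theorem~\ref{lip1}, and you also correctly flag the degeneracy of $\Theta_\alpha$ as the crux. However there are two genuine gaps.

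First, you write that ``one must verify that $L\omega'$ indeed dominates the $O(M)$ perturbation from the quadratic term, i.e.\ that $|q_x|\sim L$,'' but you offer no mechanism. This is not automatic: a~priori $|\bar x-x_0|$, $|\bar y-x_0|$ are only $O(1)$, so $q_x=q+2M(\bar x-x_0)$ could be anywhere in a ball of radius $O(M)$ around $q$, and the degenerate ellipticity (H1) then gives nothing. The paper resolves this with a two-stage bootstrap: first run the argument with $\omega(s)=s^\gamma$, $\gamma<1$, which produces the H\"older estimate~(\ref{holder}); then, in the Lipschitz stage with $\omega(s)=s-\omega_0 s^{1+\tau}$, the H\"older estimate is used at the maximum point to get $|\bar x-x_0|+|\bar y-x_0|\le (c_{\gamma,r}|\bar x-\bar y|^\gamma/M)^{1/2}$ (inequality~(\ref{x-xo})), which is what forces $|q_x|,|q_y|\in[M/4,5M/4]$ once $\delta$ is small. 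Without this intermediate H\"older step your ``$|q_x|\sim L$'' claim is unjustified and the whole ellipticity gain collapses.

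Second, your proposed fix for the degeneracy---work with the full quadratic form $\mathrm{tr}(\Theta_\alpha(q_x)(X-Y)\Theta_\alpha(q_x))$ and invoke $|\hat e_i|\ge N^{-1/2}$ for some $i$---does not survive scrutiny. The matrix $\tilde H$ coming from the doubling has one negative eigendirection (along $\hat e$) and $N-1$ positive ones; what one actually needs, and what Proposition~\ref{prop4} supplies, is a pointwise estimate showing that $\Theta\tilde H\Theta$ has \emph{one} eigenvalue $\mu_1\le -cM^{1+\alpha}|\bar x-\bar y|^{-\hat\tau}$ together with an upper bound $\mu_j\le c_1M^{1+\alpha}|\bar x-\bar y|^{-\tau_1}$ on the remaining ones with $\tau_1<\hat\tau$, so that the Pucci envelope built into (H1) is dominated by the negative part. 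Moreover, and more seriously, the proof of this eigenvalue estimate bifurcates sharply at $\alpha=2$: for $\alpha\le 2$ the simple bound~(\ref{pleq4}) with prefactor $N^{-\alpha/2}$ works, but for $\alpha>2$ it fails, and one has to introduce the index set $I(x,\epsilon)=\{i:|x_i|\ge|x|^{1+\epsilon}\}$, verify condition~(\ref{eqNepsilon}), and accept the weaker eigenvalue bound~(\ref{p>4}) with a degraded exponent carrying the extra factor $|x|^{(\alpha-2)\epsilon}$. Your sketch would, at best, reproduce the $\alpha\le 2$ case; the case $\alpha>2$ requires a genuinely different calculation and a careful choice of $\epsilon$ subordinate to $\gamma_F$ and the other exponents, none of which appears in your plan.
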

This will be a consequence of the more general result Theorem \ref{lip1} in section three.

We shall construct in Section 4 a  super-solution  of (\ref{equation})
which is  zero on the boundary. Ishii's Perron method, since the comparison principle holds, leads to the following existence's result : 
  \begin{theo} \label{corexi}
  Suppose that $\Omega$ is a bounded ${\cal C}^2$ domain and let $F$ and $h$ satisfy   (\ref{degenel}), (\ref{depenx}), (\ref{diffp}),  (H3), and  (\ref{H}). Then  for any $f\in {\cal C} (\overline{\Omega})$ there exists $u$ a viscosity solution of 
  $$\left\{ \begin{array}{lc}
   F(x, \nabla u, D^2u) + h(x, \nabla u) = f(x)&{\rm in} \ \Omega
\\
u=0 & {\rm on} \ \partial \Omega
\end{array} \right.$$
 Furthermore $u$ is Lipschitz continuous in $\Omega$ . 
  \end{theo}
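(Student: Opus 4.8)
The plan is to obtain the existence of a solution via Perron's method, using the Lipschitz bound of Theorem~\ref{lip} to produce a priori estimates and the comparison principle (which the authors assume holds) to run Ishii's argument. First I would construct a barrier: a supersolution $\bar u$ of the Dirichlet problem that vanishes on $\partial\Omega$. This is exactly what Section~4 is announced to provide, so I would use a function of the form $\bar u(x) = C\psi(d(x,\partial\Omega))$ where $d$ is the distance to the boundary (smooth near $\partial\Omega$ since $\Omega$ is ${\cal C}^2$) and $\psi$ is a suitable concave increasing profile; the structure condition (\ref{degenel}) together with (\ref{H}) and the boundedness of $f$ forces an inequality of the type $F(x,\nabla\bar u,D^2\bar u)+h(x,\nabla\bar u)\le f$ once $C$ and the profile are chosen large enough, because the ellipticity gives a genuinely negative second-order contribution $-c\,|\nabla\bar u|^\alpha\,|D^2\bar u|$ along the normal direction while the gradient term is controlled by $|\nabla\bar u|^{1+\alpha}+1$. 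Symmetrically, $\underline u=-\bar u$ is a subsolution. I would also note $F(x,0,0)=0$ so that, if $0$ fails to be ordered correctly, these barriers still sandwich any candidate.

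Next I would set up the Perron family
\[
\mathcal S=\{\,v:\ v\ \text{is a subsolution of (\ref{equation}) in }\Omega,\ \underline u\le v\le \bar u\ \text{in }\Omega\,\},
\]
which is nonempty (it contains $\underline u$) and bounded above by $\bar u$. Define $u(x)=\sup_{v\in\mathcal S} v(x)$. By the standard Perron machinery (Ishii), the upper semicontinuous envelope $u^*$ is a subsolution and the lower semicontinuous envelope $u_*$ is a supersolution; the barriers give $\underline u\le u_*\le u^*\le \bar u$, so in particular $u^*=u_*=0$ on $\partial\Omega$, i.e.\ the boundary condition is attained continuously. Then the comparison principle (valid under (\ref{degenel}), (H3)=(\ref{depenx}), (\ref{diffp}), (\ref{H}), which is precisely the list of hypotheses in Theorem~\ref{corexi}) applied to the sub/supersolution pair $u^*$ and $u_*$ yields $u^*\le u_*$, hence $u:=u^*=u_*$ is continuous and is the desired viscosity solution of the Dirichlet problem.

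Finally, for the interior Lipschitz regularity of $u$: once $u$ is a bounded continuous viscosity solution of (\ref{equation}) in $\Omega$, Theorem~\ref{lip} (equivalently Theorem~\ref{lip1}) applies verbatim on any $\Omega'\subset\subset\Omega$, giving $|u(x)-u(y)|\le C_{\Omega'}|x-y|$. (One should check that the continuity of $f$ on $\overline\Omega$ implies it is continuous and bounded on $\Omega$, which is immediate, and that the sandwich $|u|\le\|\bar u\|_\infty$ gives the $L^\infty$ bound that the constant $C_{\Omega'}$ may depend on.)

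The main obstacle I anticipate is the barrier construction in the first step: because the operator is \emph{widely} degenerate — it loses ellipticity as soon as a single partial derivative vanishes — one must be careful that $\bar u$ is a supersolution \emph{in the viscosity sense} at points where $\nabla\bar u$ has some zero component, and that the profile $\psi$ is chosen so the desired differential inequality survives the degeneracy near $\partial\Omega$; the scaling exponent $\tfrac{\alpha+2}{\alpha+1}$ appearing in (H3) suggests the right choice is $\psi(t)\sim t^{\frac{\alpha+1}{\alpha+2}}$ or a closely related power, and verifying the inequality then reduces to a one-dimensional computation using the ${\cal C}^2$ regularity of $d(\cdot,\partial\Omega)$ near the boundary. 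The other delicate point, but one the authors explicitly allow us to assume, is that the comparison principle indeed holds under hypothesis (H3); granting that, the rest is the standard Perron scheme.
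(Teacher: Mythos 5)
Your overall plan — barrier, Perron, comparison, then apply Theorem \ref{lip} — is the right scaffolding, and the observation that interior Lipschitz regularity follows verbatim from Theorem \ref{lip1} once $u$ is known to be a bounded continuous solution is correct. But there is a genuine gap at the step ``the comparison principle \dots applied to the sub/supersolution pair $u^*$ and $u_*$ yields $u^*\le u_*$.'' The comparison principle available under the stated hypotheses (Proposition \ref{comp} in the paper) applies when $\beta$ is \emph{increasing} and $f\ge g$, or when $\beta$ is merely nondecreasing but the right-hand sides satisfy the \emph{strict} inequality $f>g$. In the Perron envelope argument for the zeroth-order-free equation \eqref{equation}, both $u^*$ and $u_*$ are a sub/supersolution of the \emph{same} equation with the \emph{same} $f$, so the inequality $f>g$ fails and the $\beta=0$ comparison does not close the argument. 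This is not a formality: for these degenerate operators, one cannot simply perturb by a constant to manufacture strictness. The paper circumvents this precisely by first inserting an increasing $\beta$ (Propositions \ref{comp}--\ref{exi}), where Perron's method does apply, and then removing $\beta$ by an approximation argument together with stability of viscosity solutions and the uniform a priori Lipschitz bound from Theorem \ref{lip1} (this is the content of the Remark following Proposition \ref{exi}). Your proof needs that extra layer; running Perron directly on \eqref{equation} does not produce continuity of $u$.

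Secondary point on the barrier: your heuristic that the exponent $\tfrac{\alpha+2}{\alpha+1}$ in (H3) should dictate a profile $\psi(t)\sim t^{(\alpha+1)/(\alpha+2)}$ misreads the role of (H3), which controls the doubling-variable penalty, not the boundary barrier. The paper's barrier is $\psi(x)=M\bigl(1-(1+d(x))^{-k}\bigr)$ with $d$ the distance to the boundary. Its gradient stays bounded and bounded away from zero near $\partial\Omega$, and the favorable second-order term is $-(k+1)\lambda\sum_i|\partial_i d|^{\alpha+2}$; the crucial inequality $\sum_i|\partial_i d|^{\alpha+2}\ge N^{-\alpha/(\alpha+2)}$ (a consequence of $\sum_i(\partial_id)^2=1$) handles the ``wide degeneracy'' you worried about without requiring $\nabla\bar u$ to have all components nonzero pointwise. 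Choosing $k$ and then $M$ large makes $F(x,\nabla\psi,D^2\psi)+h(x,\nabla\psi)\le -\|f\|_\infty$. You should replace the power-law ansatz by this construction (or something with the same bounded-gradient feature); as written, a supersolution of the form $Cd^{(\alpha+1)/(\alpha+2)}$ would have an unbounded gradient at $\partial\Omega$, which at minimum forces a separate viscosity-sense verification there and is not what the paper does.
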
   
Finally in the last section we prove that the strong maximum principle holds.

   Let us end this introduction by saying a few words about the principal eigenvalues and eigenfunctions, on the model of \cite{BNV}. Indeed the regularity and existence results obtained above allow to prove the existence of a {\em principal eigenvalue} as long as the operator  ,  $F$ is in addition homogeneous,  precisely :
   
 For any $(x,p,X)$ and any $s\in\R$ and $t\geq 0$:
$$F(x, s p, t X) = |s|^\alpha t F(x,p, X). 
$$
 We  also suppose that $h$  is continuous with values in $\R^N$, and that $h(x, p) = h(x) \cdot p |p|^\alpha$. 
 Then we can  define  the two values   
 
$$\mu^+ = \{ \mu\in \R, \\ \exists \varphi >0\ {\rm in}\ \Omega,\ F(x, \grad\phi,D^2\phi) + h(x) \cdot \nabla \phi |\nabla \phi|^\alpha +\mu
\phi^{\alpha+1}\leq 0\ \}.$$
and  $$\bar\mu^-:=\sup\{ \mu\in \R,\ \ \exists \  \psi< 0\  
\ {\rm in}\ \Omega,\ F(x, \grad\psi,D^2\psi) + h(x) \cdot \nabla \psi |\nabla \psi|^\alpha +\mu
|\psi|^{\alpha}\psi \geq 0\ \}.$$
As in previous works e.g. \cite{BNV}, \cite{BEQ} and \cite{BD1}  it is easy to prove that below $\mu^+$ the classical property of maximum principle holds, i.e.
if $\tau<\bar \mu^+$ and $u$ is a solution of 
 $$ F(x, \nabla u, D^2 u) + h(x) |\nabla u| ^\alpha \nabla u + \tau |u|^\alpha u \geq 0$$
such $u\leq 0$ on $\partial \Omega$ then $u\leq 0$ in $\Omega$.
Similarly, for any $\tau<\mu^-$, the minimum principle holds.

Furthermore, one can prove the existence
of $\psi^+>0$ and $\psi^-<0$ solution, respectively of 
  $$F (x, \nabla \psi^+, D^2 \psi^+) + \mu^+ (\psi^+)^{\alpha+1} = 0\ \mbox{in }\ \Omega,\ \psi^+=0\  
    \mbox{on }\ \partial\Omega, $$
 $$F (x, \nabla \psi^-, D^2 \psi^-) + \mu^- |\psi^-|^{\alpha}\psi^- = 0\ \mbox{in }\ \Omega,\ \psi^-=0\  
    \mbox{on }\ \partial\Omega.$$
Thus the values  $\mu^+$ and $\mu^-$ are called "principal eigenvalues".
 
We will not give proofs for these last results which can be obtained arguing as 
in \cite{BD1}, and using the comparison principle in Theorem \ref{comp} and the 
Lipschitz a priori bounds.

Many questions concerning these very degenerate operators are still  open . To name a few
let us mention:

Does Alexandroff Bakelman Pucci 's inequality hold true,  similarly to the cases treated in \cite{I}? 
     
Is some Harnack inequality true (still as in \cite{I})? Even for the pseudo-$p$-Laplacian this is not known. 
Finally the  further step  in regularity is naturally the  ${\cal C}^1$ regularity. 
Even in the case $f=0$ and $N=2$ it does not seem easy to obtain.

 \section{Examples }  
  
{\bf Example 1} : 
Let  
$$ F(x, p, X) := tr( L(x)\Theta_\alpha(p) X\Theta_\alpha(p)   L(x)). $$
if $L(x)$ is a Lipschitz and bounded  matrix  such that $\sqrt{\lambda}{\rm I} \leq L\leq \sqrt{\Lambda}{\rm I} $ then conditions (\ref{degenel}) and (\ref{gammaF}) are obviously satisfied.
In order to check the  condition (H3), one uses the  right inequality in
(\ref{eqXY}) multiplied by
 $\left( \begin{array}{cc}
 L(x)  \Theta_\alpha(p)\\
  L(y) \Theta_\alpha(p) \end{array} \right)$ on the right and by  its transpose on the left.  
 Hence

  \begin{eqnarray*}
  F(x, m(x-y), X) &-&F(x,   m (x-y), -Y)\\
  & \leq&  m^t(L(x) -L(y)) \Theta_\alpha^2(m(x-y))  (L(x)-L(y)) \\
  &\leq&  m^{\alpha+1} |x-y|^{2+\alpha} \\
    \end{eqnarray*}
Let us check now condition (\ref{diffp})
     \begin{eqnarray*}
   F(x,p , X)  &-&  F(x, q, X)
      =  
      tr( L(x) (\Theta_\alpha(p)-\Theta_\alpha (q) ) X  (\Theta_\alpha (p)+ \Theta_\alpha (q))L(x)) \\
      &\leq & \Lambda |X| |\Theta_\alpha (p)^2-\Theta_\alpha(q)^2| 
      \end{eqnarray*}
which yields the result.   
     
\medskip     
{\bf Example 2 }: We define 
$$ F(x, p, X) := a (x) {{\cal M}_{ \alpha}^\pm} (p, X). $$
If $a$ is a Lipschitz function such that
$a(x) \geq a_o>0$ then conditions (\ref{degenel}) and (\ref{gammaF}) are satisfied. 
Let us check the 
condition (H3).
    
Recall the following standard extremality property of the Pucci's operators 
     
      $${\cal M}^+ (X) \leq {\cal M}^+ (-Y) + {\cal M}^+(X+Y)\ \mbox{
       and }\ {\cal M}^- (X) \leq {\cal M}^- (-Y) + {\cal M}^+(X+Y).$$
Using  the identity  

$$ a (x) {{\cal M}_{ \alpha}^\pm} (X) = {\cal M}^\pm (a(x) \Theta_\alpha(p)  X \Theta_\alpha(p) ) = {\cal M}^\pm ( \sqrt{a(x) } \Theta_\alpha(p)  X \Theta_\alpha(p)   \sqrt{a(x) } )$$
we  have 
 
  \begin{eqnarray*}
 {\cal M}^\pm ( \sqrt{a(x) }  \Theta_\alpha(p)  X \Theta_\alpha(p)   \sqrt{a(x) })&\leq&  {\cal M}^\pm ( \sqrt{a(y) }  \Theta_\alpha(p)  (-Y) \Theta_\alpha(p)   \sqrt{a(y) } )\\
 &+ &{\cal M}^+ \left[ \sqrt{a(x) }  \Theta_\alpha(p)  X \Theta_\alpha(p)   \sqrt{a(x) }\right.\\
 &+& \left. \sqrt{a(y) }  \Theta_\alpha(p) Y \Theta_\alpha(p)   \sqrt{a(y) })\right].
 \end{eqnarray*}
Multiplying  (\ref{eqXY}), by  the matrix 

$$\left(\begin{array} {cc}
  \sqrt{a(x) } \Theta_\alpha(p) & 0\\
  0& \sqrt{a(y) }  \Theta_\alpha(p) 
  \end{array} \right)$$
on the left and on the right, one obtains that  for $p = m(x-y)$, 
\begin{eqnarray*}
    \sqrt{a(x) }  \Theta_\alpha(p)  X \Theta_\alpha(p)  \sqrt{a(x) } &+ & \sqrt{a(y) }  \Theta_\alpha(p) Y \Theta_\alpha(p)   \sqrt{a(y) } \\
    &\leq& 
   m (\sqrt{a(x)}-\sqrt{a(y)})^2 \Theta_\alpha(p)  ^2\\
   & \leq& m^{\alpha+1} |x-y|^{\alpha } {(a(x)-a(y))^2\over( \sqrt{a(x)}+ \sqrt{a(y)})^2}I \\
   & \leq& (\Lip a)^2  {m^{\alpha+1} |x-y|^{\alpha+2}\over 4 a_o}I .
   \end{eqnarray*}
In particular 
    
   \begin{eqnarray*}
   {\cal M}^+ ( \sqrt{a(x) }  \Theta_\alpha(p)  X \Theta_\alpha(p)   \sqrt{a(x) } &&+  \sqrt{a(y) }  \Theta_\alpha(p)  Y \Theta_\alpha(p)   \sqrt{a(y) } )\\
   &\leq&\Lambda  (\Lip a )^2N   {m^{\alpha+1} |x-y|^{\alpha+2}\over 4 a_o}.
   \end{eqnarray*}
Let us check finally (\ref{diffp}) ,  for that, it is clear that one can suppose $a(x)=1$, 
we write

\begin{eqnarray*}
|{\cal M}_\alpha ^\pm ( p, X)-{\cal M}^\pm _\alpha (q, X) |&\leq& {\cal M}^+ ( \Theta_\alpha(p)  X \Theta_\alpha(p)  -\Theta_\alpha (q) X \Theta_\alpha (q)) \\
&=&{1\over 2} {\cal M}^+ \left[  (\Theta _\alpha(p) )- \Theta_\alpha (q) )X (  \Theta_\alpha (p)+  \Theta_\alpha (q))\right. \\
&+& \left. (  \Theta_\alpha (p)+  \Theta_\alpha (q)) X (  \Theta_\alpha (p)-  \Theta_\alpha (q))\right]\\
&\leq & \Lambda  |\Theta_\alpha^2 (p)- \Theta_\alpha^2 (q)| |X| 
\end{eqnarray*}
    \section{Proof of Lipschitz regularity.}
In this section we prove our main result:
\begin{theo}
 \label{lip1}
Let $f$ and $g$ be continuous and bounded in $\Omega$, while $F$, $\Omega$  and  $h$ satisfy  the hypothesis in Theorem \ref{lip}.  Suppose that $u$ is a   bounded USC sub-solution of 
$$ F(x, \nabla u, D^2 u) + h(x, \nabla u)\geq f\ \mbox{in}\ \Omega$$
and  $v$ is a  bounded LSC super-solution of 
 $$F(x, \nabla v, D^2 v)+ h(x, \nabla v) \leq g\ \mbox{in}\ \Omega.$$
Then for  any $\Omega^\prime \subset \subset \Omega$  
there exists $C_{\Omega^\prime}$,  such that   
for   any $(x,y)\in (\Omega^\prime)^2$ 
$$u(x) \leq v(y) + \sup_{\Omega}  (u-v) + C_{\Omega^\prime}  |x-y|.$$
\end{theo}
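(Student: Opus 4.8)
The strategy is the classical Ishii--Lions doubling-of-variables argument, adapted to the degenerate operator $F$ through the weight $\Theta_\alpha$. Fix $\Omega'\subset\subset\Omega$ and, after a standard localization, reduce to proving the estimate on a smaller ball where $u$ and $v$ are bounded. The plan is to show that the function
\[
\Phi(x,y)=u(x)-v(y)-\sup_\Omega(u-v)-\varphi(|x-y|)-M|x-x_0|^2-M|y-x_0|^2
\]
is nonpositive on $(\Omega')^2$, where $\varphi(s)=As-\omega_0 s^{1+\beta}$ is a concave test function (with $\beta$ small, e.g.\ related to $\frac{1}{\alpha+1}$) chosen so that $\varphi'(0)=A$ is the Lipschitz constant we are after and $\varphi$ stays increasing on the relevant range, and where the quadratic penalization with $M$ large confines any interior maximum of $\Phi$ to a neighborhood of $x_0$ inside $\Omega$. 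Assume for contradiction that $\sup\Phi>0$; then the maximum is attained at some interior point $(\bar x,\bar y)$ with $\bar x\neq\bar y$, and one may assume $\bar x-\bar y$ is small by taking $A$ large first.

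Next I would apply the Theorem on sums (Jensen--Ishii) to produce symmetric matrices $X,Y$ with $(\,q+2M(\bar x-x_0),X+2M\,\mathrm{I})$ in the closure of the second-order superjet of $u$ at $\bar x$ and $(\,q-2M(\bar y-x_0),Y-2M\,\mathrm{I})$ in the closure of the second-order subjet of $v$ at $\bar y$, where $q=\varphi'(|\bar x-\bar y|)\frac{\bar x-\bar y}{|\bar x-\bar y|}$, and with the matrix inequality
\[
-\big(\|D^2\varphi\|+\textstyle\frac1\kappa\big)\begin{pmatrix}\mathrm I&0\\0&\mathrm I\end{pmatrix}\le\begin{pmatrix}X&0\\0&-Y\end{pmatrix}\le(D^2\varphi+\kappa (D^2\varphi)^2)\begin{pmatrix}\mathrm I&-\mathrm I\\-\mathrm I&\mathrm I\end{pmatrix}.
\]
Here the key quantitative input is that, because $\varphi$ is \emph{strictly} concave, $D^2\varphi(|\bar x-\bar y|)$ has a strictly negative eigenvalue of size $\sim\omega_0 s^{\beta-1}$ in the direction $\bar x-\bar y$; this forces $X-Y$ to be not merely $\le 0$ but to have a genuinely negative eigenvalue in that direction, which is what will beat the degeneracy of $F$. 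The gradient $q$ is nonzero with $|q|\sim A$, so the weight $\Theta_\alpha(q)$ is nondegenerate along that distinguished direction (its entry there is $\gtrsim A^{\alpha/2}$), even though other diagonal entries of $\Theta_\alpha(q)$ may be small.

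Then I would write the two viscosity inequalities, subtract them, and estimate term by term. Using (H4) to replace the gradient $q+2M(\bar x-x_0)$ in the first by $q-2M(\bar y-x_0)$ up to an error $c_F|\,|p|^\alpha-|q|^\alpha|\,|X|\lesssim M|\bar x-\bar y|$, using (H2)/(\ref{gammaF}) for the $x$-dependence to get an error $c_{\gamma_F}|\bar x-\bar y|^{\gamma_F}|q|^\alpha|X|$, using (\ref{H}) to bound $|h(x,\nabla u)-h(y,\nabla v)|$ by something controlled by $(|q|^{1+\alpha}+1)$ plus the penalization terms, and finally using (\ref{degenel}) to bound the principal part $F(\bar x,q,X+2M\mathrm I)-F(\bar x,q,Y-2M\mathrm I)$ from below. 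The crux is that (\ref{degenel}) gives
\[
F(\bar x,q,X)-F(\bar x,q,Y)\le \Lambda\,\mathrm{tr}\!\big(\Theta_\alpha(q)(X-Y)\Theta_\alpha(q)\big),
\]
wait --- more precisely one splits off the good negative part: one writes $F(\bar x,q,X)-F(\bar x,q,Y)\le\lambda\,\mathrm{tr}(\Theta_\alpha(q)(X-Y)\Theta_\alpha(q))$ when $X-Y\le 0$, and the distinguished negative eigenvalue of $X-Y$ of size $\sim\omega_0 A s^{\beta-1}$, weighted by $|q_{i_0}|^\alpha\gtrsim A^\alpha$ along the direction $\bar x-\bar y$ (here using that $|(\bar x-\bar y)_{i_0}|\gtrsim |\bar x-\bar y|/\sqrt N$ for some coordinate $i_0$), produces a term $\lesssim -c\,A^{1+\alpha}\omega_0 s^{\beta-1+?}$ that dominates all the positive error terms once $A$ (hence $\omega_0 A$) is chosen large and $M$ is fixed appropriately in terms of the data and $\mathrm{dist}(\Omega',\partial\Omega)$. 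This contradiction with $\sup\Phi>0$ yields $\Phi\le 0$, and evaluating at a pair in $(\Omega')^2$ and letting $x_0$ vary gives the claimed estimate with $C_{\Omega'}$ depending only on $A$.

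\textbf{Main obstacle.} The delicate point is the bookkeeping of the degeneracy: $\Theta_\alpha(q)$ is nondegenerate only in the single direction $\bar x-\bar y$, and $X-Y$ is genuinely negative only in (essentially) that same direction, so one must make sure the good term $\mathrm{tr}(\Theta_\alpha(q)(X-Y)\Theta_\alpha(q))$ actually captures the overlap of these two directions and is not swamped by the $2M\mathrm I$ corrections or by the $\kappa(D^2\varphi)^2$ slack in the Jensen--Ishii inequality; this is where the precise choice of the exponent $\frac{\alpha+2}{\alpha+1}$ appearing in (H3)/(\ref{depenx}) and of $\beta$ in $\varphi$ matters, and where hypothesis (\ref{depenx}) is invoked to control the residual $x$-dependence of the principal part at the scale $m|\bar x-\bar y|^{(\alpha+2)/(\alpha+1)}$. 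Getting the powers of $A$, $s=|\bar x-\bar y|$ and $\omega_0$ to line up so that the negative term wins uniformly is the heart of the argument.
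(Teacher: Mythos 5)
Your overall strategy—doubling of variables with a concave radial test function, Ishii's matrix lemma, then playing the good negative eigenvalue of $\Theta(q)(X+Y)\Theta(q)$ against the error terms from (H2), (H4) and $h$—is indeed the route the paper takes. But two essential ingredients of the actual proof are missing from your plan, and the first one is not a matter of bookkeeping: it is the point where a naive version of the argument fails.

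\textbf{The eigenvalue estimate is not as simple as you state.} You write that the negative eigenvalue of $X+Y$ in the direction $\bar x-\bar y$, of size $\sim M\omega''(s)$, gets ``weighted by $|q_{i_0}|^\alpha\gtrsim (M\omega'(s))^\alpha$ along that direction''. This is where the reasoning breaks down: $\Theta_\alpha(q)$ is a \emph{diagonal} matrix, whereas $\bar x-\bar y$ is a generic direction, and conjugation by $\Theta_\alpha(q)$ does not act as a scalar multiple on the eigenvector of $X+Y$. Writing $v=(\bar x-\bar y)/s$ and trying to transfer the negativity of $v^{T}(X+Y)v$ to $\Theta v$ leads to a factor $|\Theta^{-1}v|^{-2}=(M\omega'(s))^{\alpha}\big(\sum_i|v_i|^{2-\alpha}\big)^{-1}$. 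When $\alpha\le 2$ the sum is $\le N$ and the estimate survives (this is exactly part (1) of Proposition \ref{prop4}). But when $\alpha>2$ the sum $\sum_i|v_i|^{2-\alpha}$ can be arbitrarily large if some coordinate of $\bar x-\bar y$ is much smaller than $|\bar x-\bar y|$, and the ``good'' eigenvalue degenerates. This is precisely why the paper restricts attention to the index set $I(\bar x-\bar y,\epsilon)=\{i:|(\bar x-\bar y)_i|\ge|\bar x-\bar y|^{1+\epsilon}\}$ and only obtains a weaker bound with an extra factor $|\bar x-\bar y|^{(\alpha-2)\epsilon}$ (part (2) of Proposition \ref{prop4}). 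Your ``using that $|(\bar x-\bar y)_{i_0}|\gtrsim|\bar x-\bar y|/\sqrt N$ for some coordinate $i_0$'' only controls the largest component and does not rescue the sum over the others. Your ``Main obstacle'' paragraph shows you sense the danger, but the proposal offers no mechanism for overcoming it; without Proposition \ref{prop4} (proved in \cite{D}) the contradiction cannot be closed for $\alpha>2$.

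\textbf{The H\"older bootstrap is missing.} The viscosity inequalities are tested at the gradients $q^x=q+2M(\bar x-x_0)$ and $q^y=q-2M(\bar y-x_0)$, not at $q$ itself. To make the perturbation terms $2M(\bar x-x_0)$, $2M(\bar y-x_0)$ genuinely small compared to $|q|\sim M$, the paper \emph{first} runs the whole argument with $\omega(s)=s^\gamma$, $\gamma<1$, to obtain the H\"older estimate $u(x)\le v(y)+\sup(u-v)+c_{\gamma,r}|x-y|^\gamma$, and then plugs this back in to deduce $|\bar x-x_0|+|\bar y-x_0|\le(c_{\gamma,r}|\bar x-\bar y|^\gamma/M)^{1/2}$, which is exactly what gives $|q^x|,|q^y|\in[M/4,5M/4]$ and allows (H4) and (\ref{H}) to be used with the correct powers of $M$ and $s$. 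Your plan of simply ``taking $M$ large and $A$ large in the right order'' does not yield this quantitative smallness, so the errors from $h$ and from (H4) are not controlled by the good term. Without this two-step scheme (H\"older first, then Lipschitz), the final inequality does not close.

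\end{document}
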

We start by recalling some general facts.

If $\psi:\R^N\times\R^N\rightarrow \R$, let $D_1\psi$ denotes the gradient in the first 
$N$  variables and $D_2\psi$ the gradient in the last $N$ variables.

In the proof of Theorem \ref{lip1} we shall need the following technical lemma.

\begin{lemme} \label{lem2} Suppose that $u$ and $v$ are respectively  USC and LSC functions such that, for some constant $M>1$ and   for some function $\Phi$         \[     
            u(x)-v(y) -M  |x-x_o|^2 -M  |y-x_o|^2- \Phi(x,y)
\]
has a local maximum in $(\bar x,\bar y)$ where $\phi$ is ${\cal C}^2$.

Then for any $\iota$,   there exist $X_\iota ,Y_\iota$ such that 
\begin{eqnarray*}
 (D_1 \Phi(\bar x, \bar y)+ 2M (\bar x-x_o), X_\iota) \in \bar J^{2,+} u(\bar x), \\ 
 (-D_2 \Phi(\bar x, \bar y) -2M (\bar y-x_o), -Y_\iota) \in \bar J^{2,-} v(\bar y)
\end{eqnarray*}
with 
$$ -({1\over \iota} + |A|+1 ) \left(\begin{array}{cc}
           {\rm I} &0\\
           0&{\rm I}\end{array}\right) \leq \left(\begin{array}{cc}
           X_\iota-2M{\rm I}&0\\
           0& Y_\iota-2M {\rm I}
           \end{array}\right)\leq (A+\iota A^2)    +  \left(\begin{array}{cc}
           {\rm I} &0\\
           0&{\rm I}\end{array}\right)         
$$
and $A = D^2 \Phi(\bar x, \bar y)$.
\end{lemme}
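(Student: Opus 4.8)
The statement of Lemma \ref{lem2} is essentially a repackaging of the Theorem on Sums (the Crandall--Ishii lemma) adapted to the situation where part of the test function is the fixed, smooth quadratic $M|x-x_o|^2 + M|y-x_o|^2$. The plan is to apply the standard Jensen--Ishii lemma to the full test function $\widetilde\Phi(x,y) := M|x-x_o|^2 + M|y-x_o|^2 + \Phi(x,y)$, which is ${\mathcal C}^2$ since $\Phi$ is, and then separate out the contribution of the quadratic part explicitly.

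First I would write $\widetilde A := D^2\widetilde\Phi(\bar x,\bar y)$ and observe the block decomposition $\widetilde A = 2M\,\mathrm{diag}(\mathrm{I},\mathrm{I}) + A$ where $A = D^2\Phi(\bar x,\bar y)$; note also $D_1\widetilde\Phi(\bar x,\bar y) = D_1\Phi(\bar x,\bar y) + 2M(\bar x - x_o)$ and similarly for $D_2$. Applying the Crandall--Ishii lemma (in the form with parameter $\iota$, see \cite{CIL} or the users' guide) to the local maximum of $u(x) - v(y) - \widetilde\Phi(x,y)$ produces, for every $\iota > 0$, matrices $\widetilde X_\iota, \widetilde Y_\iota$ with $(D_1\widetilde\Phi(\bar x,\bar y), \widetilde X_\iota) \in \bar J^{2,+}u(\bar x)$, $(-D_2\widetilde\Phi(\bar x,\bar y), -\widetilde Y_\iota) \in \bar J^{2,-}v(\bar y)$, and
\[
-\left(\tfrac1\iota + |\widetilde A|\right)\mathrm{diag}(\mathrm{I},\mathrm{I}) \leq \mathrm{diag}(\widetilde X_\iota, \widetilde Y_\iota) \leq \widetilde A + \iota \widetilde A^2.
\]
Then I would set $X_\iota := \widetilde X_\iota$ and $Y_\iota := \widetilde Y_\iota$; the jet memberships are exactly the two displayed inclusions in the statement once one rewrites the gradients. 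For the matrix inequality, I would substitute $\widetilde A = 2M\,\mathrm{diag}(\mathrm{I},\mathrm{I}) + A$ and move the $2M\,\mathrm{diag}(\mathrm{I},\mathrm{I})$ term to the middle, so that $\mathrm{diag}(\widetilde X_\iota - 2M\mathrm{I}, \widetilde Y_\iota - 2M\mathrm{I})$ is bracketed. On the right one gets $A + \iota\widetilde A^2$; crudely bounding $\iota\widetilde A^2$ by something absorbed into the additive $\mathrm{diag}(\mathrm{I},\mathrm{I})$ term (or rather, using the freedom in $\iota$ and the estimate $\widetilde A^2 \le (|A| + 2M)^2 \mathrm{diag}(\mathrm{I},\mathrm{I})$, then relabelling $\iota$) yields an upper bound of the form $(A + \iota A^2) + \mathrm{diag}(\mathrm{I},\mathrm{I})$. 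On the left, $-(\tfrac1\iota + |\widetilde A|) \ge -(\tfrac1\iota + |A| + 2M)$, and since $M$ appears, one reabsorbs the $2M$ — this is why the statement carries the factor $M$ implicitly through the ``$+1$'' being harmless after rescaling, or more honestly the statement should be read with the understanding that constants depending on $M$ are tracked; the clean form in the lemma follows by choosing the normalization of $\iota$ appropriately.

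The only genuinely delicate point is bookkeeping: making the algebraic manipulation of the two-sided matrix inequality land \emph{exactly} on the asymmetric form stated (lower bound governed by $\tfrac1\iota + |A| + 1$, upper bound by $A + \iota A^2 + \mathrm{diag}(\mathrm{I},\mathrm{I})$), which requires being slightly careful that the cross terms in $\widetilde A^2 = (2M\,\mathrm{diag}(\mathrm{I},\mathrm{I}) + A)^2 = 4M^2\mathrm{diag}(\mathrm{I},\mathrm{I}) + 4MA + A^2$ are controlled — the $4MA$ term is the one to watch, but it is dominated by $\iota A^2 + \mathrm{const}$ for $\iota$ fixed, or absorbed using $M > 1$ into the $\mathrm{diag}(\mathrm{I},\mathrm{I})$ slack. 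Everything else is a direct quotation of the Theorem on Sums, so I expect the proof to be short: invoke the lemma, decompose, reabsorb.
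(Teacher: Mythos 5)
Your approach (apply the Theorem on Sums directly to $u(x)-v(y)-\widetilde\Phi(x,y)$ with $\widetilde\Phi := M|x-x_o|^2+M|y-x_o|^2+\Phi$, then decompose $\widetilde A = 2M\,\mathrm{diag}(\mathrm{I},\mathrm{I})+A$) is not the one the paper uses, and it does not close as written. The paper instead Taylor-expands $\Phi$ alone around $(\bar x,\bar y)$ with an $\epsilon\mathrm{I}$ remainder, folds \emph{both} the linear parts and the quadratic $M|x-\bar x|^2$, $M|y-\bar y|^2$ into the definitions of the auxiliary functions $U$, $V$ to which Ishii's lemma is applied, so that the matrix appearing in Ishii's lemma is $D^2\Phi(\bar x,\bar y)+\epsilon\mathrm{I}$, \emph{not} $\widetilde A$. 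The only place $M$ then enters is the exact jet shift $X_\iota = X^U+2M\mathrm{I}$, $Y_\iota = X^V+2M\mathrm{I}$, which is an identity and costs nothing; the ``$+1$'' slack is paid entirely by the free parameter $\epsilon$, chosen (after $\iota$ is fixed) so that $2\epsilon\iota|D^2\Phi|+\epsilon+\iota\epsilon^2<1$.

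The gap in your version is precisely the point you flag and then wave away. After subtracting $2M\,\mathrm{diag}(\mathrm{I},\mathrm{I})$ from the middle, the right-hand side becomes $A+\iota\widetilde A^2 = A+\iota A^2 + 4\iota M A + 4\iota M^2\,\mathrm{diag}(\mathrm{I},\mathrm{I})$, and the left-hand bound becomes $-(\tfrac1\iota+|\widetilde A|+2M)$ with $|\widetilde A|$ of order $|A|+2M$. The residual terms $4\iota MA$, $4\iota M^2\,\mathrm{diag}(\mathrm{I},\mathrm{I})$ and the $O(M)$ shift in the lower bound are \emph{not} controlled by the ``$\pm\mathrm{diag}(\mathrm{I},\mathrm{I})$'' slack and ``$+1$'' in the lemma: $M$ is a fixed given constant that you cannot send to zero, and the lemma asserts its conclusion for \emph{every} $\iota>0$, so ``choosing the normalization of $\iota$ appropriately'' is not available to you — $\iota$ is on the wrong side of the quantifier. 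For $\iota$ not small compared to $1/(M(|A|+M))$, your inequality simply does not reduce to the stated one. Concretely, what breaks is that your only tunable small parameter is $\iota$ itself, whereas the paper deliberately manufactures an independent small parameter $\epsilon$ (the Taylor remainder) that it can crush for each fixed $\iota$. If you want to salvage the direct approach you would have to weaken the statement (allow $M$-dependent constants in both bounds), which would then require re-examining the way the lemma is invoked in the proof of Theorem \ref{lip1}; the paper's route avoids this bookkeeping entirely.
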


This is a direct consequence of a famous Lemma by Ishii  \cite{I1}. For the convenience of the reader the proof of Lemma \ref{lem2} is given in the appendix.  In the sequel, for 
some $M$, we will use  Lemma \ref{lem2} with  
$\Phi(x,y) :=Mg(x-y)$, where $g$ is some functions which is ${\cal C}^2$ except at $0$,  to be   defined 
later.  
Denoting by $H_1(x) := D^2 g(x)$,  then   
             
       $$ D^2\Phi =M \left(\begin{array}{cc} H_1(\bar x-\bar y)& -H_1(\bar x-\bar y)\\
             -H_1(\bar x-\bar y)& H_1(\bar x-\bar y)\end{array} \right) $$ Choosing   $\iota = {1\over 4M |H_1(x)|}$, and  defining 
$\tilde H(x) := H_1 (x)+ {2\over 4 |H_1(x)|}  H_1^2(x)$, one has 
$$\ D^2\Phi + \iota (D^2\Phi)^2 =    M\left(\begin{array}{cc} \tilde H(\bar x-\bar y)& -  \tilde H(\bar x-\bar y)\\
 -\tilde H(\bar x-\bar y)&  \tilde H(\bar x-\bar y)\end{array} \right). $$ 
Remark that $|A| = 2M |H_1(\bar x-\bar y)|$. 
We give some precisions on the choice of $g$: 
 We will assume that $g$ is radial, say there exists $\omega$ some   continuous  function on $\R^+$,  such that $g(x) = \omega (|x|)$ and   $\omega$ is supposed to satisfy : 
 
  \begin{equation}
  \label{omega}
  \omega(0)=0, \ \omega \ {\rm is} \ {\cal C}^2 \mbox{on} \ \R^{+\star}, \ \omega (s) >0, \   \omega^\prime (s)>0\ {\rm  and} \  \omega^{\prime \prime} (s) <0 \ {\rm on} \ ]0,1[.
  \end{equation} For $x\neq 0$, it is well known that 
 $Dg (x)= \omega^\prime (|x|) {x\over |x|}$
  and 
  $$D^2 g (x) = \left(\omega^{\prime \prime } ( |x|) -{\omega^\prime ( |x|)\over  |x|}\right) {x\otimes x\over  |x|^2} +  {\omega^\prime ( |x|)\over  |x|} {\rm I}. $$  For $\iota \leq    {1\over 4 | D^2g(x)| } $, there exist constants $\gamma_H\in ]{1\over 2} , {3\over 2}], \  \beta_H \geq {1\over 2}$ such that 
   \begin{equation} \label{alphabetaH}
   D^2g+2 \iota  (D^2g)^2 (x)=  \left(\beta_H\omega^{\prime \prime } ( |x|) -\gamma_H{\omega^\prime ( |x|)\over  |x|}\right) {x\otimes x\over  |x|^2} + \gamma_H {\omega^\prime ( |x|)\over  |x|} {\rm I}. 
   \end{equation} For $|x|<1$ and $\epsilon>0$, we shall use the following set:
$$
I(x, \epsilon) := \{ i\in [1,N] , |x_i | \geq  |x|^{1+\epsilon} \}. 
$$ 
 We  also define the diagonal matrix $\Theta(x)$ with  entries 
$\Theta_{ii}(x) =\left\vert { \omega^\prime (|x|) x_i\over |x|}\right\vert^{\alpha \over 2}$. 
   
A consequence of (\ref{alphabetaH}) is the following Proposition proved in \cite{D}.
\begin{prop}[\cite{D}]\label{prop4}      
  \noindent  1) If $\alpha \leq 2$,  for all $x\neq 0$,   $|x|<1$,   $\Theta (x) \tilde H (x)\Theta (x)$
 has at least one eigenvalue smaller than 
 \begin{equation}\label{pleq4} N^{-\alpha\over 2}\  \beta_H\omega^{\prime\prime} (|x|)(\omega^\prime (|x|) )^{\alpha }.
 \end{equation}
 2)  If $\alpha>2$, for all $x\neq 0$, $|x|<1$,   for any   $\epsilon >0$    such that $I(x, \epsilon) \neq \emptyset$,  and  such that 
 \begin{equation}\label{eqNepsilon}
\beta_H\omega^{\prime\prime} (|x|) (1-N |x|^{2\epsilon} )+\gamma_H N  |x|^{2\epsilon}{ \omega^{\prime }(|x|)\over |x|} \leq  {\omega^{\prime\prime} (|x|)\over 4}  <0 , 
  \end{equation} 
then  $\Theta (x) \tilde H (x)\Theta (x)$ possesses at least one eigenvalue smaller than 
\begin{eqnarray}\label{p>4}
 {1-N |x|^{2\epsilon}\over \# I(x, \epsilon)} (\omega^\prime (|x|))^{\alpha }\frac{\omega^{\prime\prime} (|x|)}{4}|x|^{(\alpha-2)\epsilon}  .
\end{eqnarray}\end{prop}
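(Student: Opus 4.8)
The plan is to bound, in both cases, the smallest eigenvalue of $\Theta(x)\tilde H(x)\Theta(x)$ from above by evaluating the associated quadratic form on one carefully chosen vector. By (\ref{alphabetaH}) the matrix $\tilde H(x)$ has the form $\tilde H(x)=a\,{\rm I}+b\,\frac{x\otimes x}{|x|^2}$ with $a=\gamma_H\frac{\omega'(|x|)}{|x|}>0$ and $a+b=\beta_H\omega''(|x|)<0$; in particular $\tilde H(x)\,x=\beta_H\omega''(|x|)\,x$, so $x$ is the eigendirection carrying the (unique) negative eigenvalue. Since $\Theta(x)$ is diagonal with positive entries (we may assume every $x_i\neq 0$; otherwise one argues inside the coordinate subspace spanned by the $e_i$ with $x_i\neq 0$, where the computations below are unchanged), for any nonzero $\eta$ the Rayleigh quotient of $\Theta\tilde H\Theta$ at $\xi=\Theta(x)^{-1}\eta$ equals $\langle\tilde H(x)\eta,\eta\rangle/|\Theta(x)^{-1}\eta|^2$, and this is an upper bound for the least eigenvalue of $\Theta(x)\tilde H(x)\Theta(x)$.

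For part 1 ($\alpha\le 2$) I would take $\eta=x$. The numerator becomes $\beta_H\omega''(|x|)\,|x|^2$ and $|\Theta(x)^{-1}x|^2=(\omega'(|x|)/|x|)^{-\alpha}\sum_i|x_i|^{2-\alpha}$. Because $0\le 2-\alpha\le 2$, the function $t\mapsto t^{(2-\alpha)/2}$ is concave on $[0,\infty)$, so Jensen's inequality applied to $x_1^2,\dots,x_N^2$ gives $\sum_i|x_i|^{2-\alpha}\le N^{\alpha/2}\,|x|^{2-\alpha}$; inserting this and using $\beta_H\omega''(|x|)<0$ yields exactly the bound (\ref{pleq4}).

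For part 2 ($\alpha>2$) the obstacle is that $2-\alpha<0$, so $|\Theta(x)^{-1}x|^2$ may be huge if some coordinate of $x$ is small; this is why one must project onto the good set $I=I(x,\epsilon)$. I would take $\eta=y$, where $y$ has coordinates $x_i$ for $i\in I$ and $0$ otherwise, and set $s:=|y|^2=\sum_{i\in I}x_i^2$. By the definition of $I$, $|x|^2-s=\sum_{i\notin I}x_i^2\le N\,|x|^{2(1+\epsilon)}$, hence $s\ge|x|^2(1-N|x|^{2\epsilon})$, which is strictly positive since (\ref{eqNepsilon}) forces $1-N|x|^{2\epsilon}>0$. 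Since $\langle x,y\rangle=|y|^2=s$, the numerator is $\langle\tilde H(x)y,y\rangle=as+\frac{b}{|x|^2}s^2=s\,\big(a(1-s/|x|^2)+\beta_H\omega''(|x|)\,s/|x|^2\big)$; estimating $1-s/|x|^2\le N|x|^{2\epsilon}$ and $s/|x|^2\ge 1-N|x|^{2\epsilon}$ and invoking (\ref{eqNepsilon}) gives $\langle\tilde H(x)y,y\rangle\le s\,\frac{\omega''(|x|)}{4}\le|x|^2(1-N|x|^{2\epsilon})\,\frac{\omega''(|x|)}{4}$. For the denominator, $|\Theta(x)^{-1}y|^2=(\omega'(|x|)/|x|)^{-\alpha}\sum_{i\in I}|x_i|^{2-\alpha}$, and since $|x_i|\ge|x|^{1+\epsilon}$ for $i\in I$ while $2-\alpha<0$, one gets $\sum_{i\in I}|x_i|^{2-\alpha}\le\#I(x,\epsilon)\,|x|^{(1+\epsilon)(2-\alpha)}$. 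Dividing, the powers of $|x|$ combine to $|x|^{(\alpha-2)\epsilon}$ and one obtains precisely (\ref{p>4}).

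The only genuinely delicate point is the numerator estimate in part 2: one must track signs carefully so that the positive contribution $a(1-s/|x|^2)$, coming from the part of $\tilde H$ that is positive definite, is absorbed into the negative term, which is exactly what hypothesis (\ref{eqNepsilon}) is tailored for. Everything else is bookkeeping of powers of $|x|$ and of the bounded constants $\beta_H,\gamma_H$.
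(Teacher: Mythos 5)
Your proof is correct, and the Rayleigh‐quotient argument is clean and complete. The key identity $\mu_{\min}\bigl(\Theta\tilde H\Theta\bigr)\le\langle\tilde H\eta,\eta\rangle/|\Theta^{-1}\eta|^2$ for any test vector $\eta$ (with the natural $0/0=0$ convention when some $x_i=0$, or equivalently working in the coordinate subspace where $\Theta$ is invertible) is exactly what is needed, and the choices $\eta=x$ (part 1) and $\eta=$ the projection of $x$ onto the coordinates in $I(x,\epsilon)$ (part 2) are the right ones. Part 1 follows from Jensen's inequality for $t\mapsto t^{(2-\alpha)/2}$ exactly as you say, and in part 2 the decomposition $a(1-t)+\beta_H\omega''\,t$ with $t=s/|x|^2$ together with the bounds $1-t\le N|x|^{2\epsilon}$, $t\ge 1-N|x|^{2\epsilon}>0$ feeds directly into hypothesis (\ref{eqNepsilon}); the power count in $|x|$ then yields (\ref{p>4}) on the nose. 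Since the present paper merely cites \cite{D} for this proposition without reproducing the argument, there is nothing to compare against in the text itself, but the Rayleigh‐quotient route you take is the natural and essentially canonical one for bounding the bottom eigenvalue of $\Theta\tilde H\Theta$.
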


[Proof of Theorem \ref{lip1}]
Borrowing ideas from \cite{IS}, \cite{BCI}, \cite{BD2}, for some $x_o\in B_r$ we define
 the function 
     $$\psi(x,y) = u(x)-v(y)-\sup (u-v) - M \omega (|x-y|) -M |x-x_o|^2 -M |y-x_o|^2;$$
 $M$ is a large constant  and  $\omega$  is a function satisfying (\ref{omega}), both to be defined more precisely later . 

If there exists $M$  independent of $x_o\in B_r$ such that $\psi(x,y)\leq 0$ in $B_1^2$, by  taking  $ x= x_o$ and using $|x_o-y| \leq 2$ one gets 
      $$ u(x_o)-v(y)\leq \sup (u-v)+ 3M \omega (|x_o-y|).$$
       So making $x_o$ vary we obtain that for any $(x,y)\in B_r^2$
       $$ u(x) -v(y) \leq \sup (u-v)+ M \omega (|x-y|).$$
      
  This proves the theorem when $\omega$ behaves like $s$ near zero.  This can be done once the case where $\omega (s)  = s^\gamma$ is treated  for  $\gamma \in ]0,1[$, i.e the H\"older's analogous result.

\medskip
In order to prove that $\psi(x,y)\leq 0$ in $B_r^2$, suppose by contradiction that the supremum of 
$\psi$, achieved  on $(\bar x, \bar y)\in \overline{B_1}^2$, is positive.
If 
we have chosen $M$ such that 
\begin{equation} \label{Mr}
M(1-r)^2 > 4 (|u|_\infty + |v|_\infty), \ {\rm and } \ M > {2 |u|_\infty + 2 |v|_\infty \over \omega (\delta)},
\end{equation}
we would get that 
$|\bar x-x_o| , |\bar y-x_o| < {1-r\over 2}$.  
Hence,  by (\ref{Mr}), $\bar x$ and $\bar y$ are in $B_{1+r\over 2}$ i.e. in $B_1$. 
Furthermore, always using (\ref{Mr}), the positivity of the supremum of $\psi$  
leads to $|\bar  x -\bar y| < \delta$. 

As it is shown later the contradiction will be found by choosing $\delta$ small enough and $M$ large 
enough depending on $(r,\alpha, \lambda, \Lambda, N)$.

We proceed using Lemma \ref{lem2} and so, for all $\iota>0$ there exist 
$ X_\iota$ and $Y_\iota$ such that
$$(q+2M(x-x_o), X_\iota ) \in \overline{J}^{2,+} u(\bar x)\ \mbox{ and}  \
     (q-2M(y-x_o), -Y_\iota )\in  \overline{J}^{2,-}v(\bar y)$$ with 
$q = M \omega^\prime (|\bar x-\bar y|) \frac{\bar x-\bar y}{|\bar x-\bar y|} $. 
Furthermore, still using the above notations i.e. $g(x) = \omega (|x|)$,  
and  choosing $\iota = {1\over 4M|D^2 g(x)|} $,  for  $\bar H=(D^2 g(x)+ {1\over 2 |D^2 g(x)|} D^2 g(x)^2 ) $, we have that
     
\begin{eqnarray}\label{ine}
      -   ( {1\over \iota} +M|\bar H|)\left(\begin{array} {cc}
     I& 0\\
     0& I\end{array} \right) 
     &\leq& \left(\begin{array}{cc}
     X_\iota-(2M+1) {\rm I}   & 0\\
     0& Y_\iota-(2M+1) {\rm I} \end{array} \right) \nonumber \\
     &\leq& M\left( \begin{array}{cc} \bar H  & -  \bar H\\
     -\bar H& \bar H
     \end{array} \right).
     \end{eqnarray} 

 From now on we will drop the 
$\iota $ for $X$ and $Y$. Recall that $\Theta(q)$ is the diagonal matrix such that $(\Theta )_{ii}(q) = \left({|q_i|} \right)^{\alpha \over 2}$.

 In order to end the proof we will prove the following claims. 
        
{\bf Claims.} {\em There exists $\hat \tau >0$, such that, if $\delta$ is small enough and $|x-y| < \delta$  the matrix $ \Theta ( X+Y) \Theta$ has one eigenvalue $\mu_1$ such that
\begin{equation}\label{vpneg}
        \mu_1(\Theta ( X+Y) \Theta) \leq   -c M^{\alpha+1}|\bar x-\bar y|^{-\hat \tau}
\end{equation}
There exist $\tau_i<\hat \tau$ and $c_i$ for $i=1,\dots, 4$ such that  the four following assertions hold : 
  \begin{equation} \label{autresvp}
\mbox{ for  all} \   j\geq 2 \ \mu_j (\Theta (X+Y) \Theta) \leq 
c_1 M^{\alpha+1}|\bar x-\bar y|^{-\tau_1},
\end{equation} 
     
\begin{equation}\label{eqqx}
          |F(\bar x, q^x, X)- F(\bar x, q, X)|+    |F(\bar y, q^y, -Y)- F(\bar y, q, -Y)|\leq c_2 M^{\alpha+1}|\bar x-\bar y|^{- \tau_2}
\end{equation}

\begin{equation} \label{eqF}
          |F(\bar x, q, X)-F(\bar y, q , X)| + |F(\bar x, q, -Y)- F(\bar y, q, -Y)| \leq c_3 M^{1+\alpha} |\bar x-\bar y|^{-\tau_3},
          \end{equation}
          \begin{equation}\label{eqh}
         |  h(\bar  x, q^x)| + |h(\bar y, q^y) |  \leq c_4 M^{1+\alpha} |\bar x-\bar y|^{-\tau_4}.
           \end{equation} }
           
           From all these claims,  by taking $\delta$ small enough depending only on $c_i$ and $\tau_i$, $\lambda, \Lambda, \alpha, N, r$  one gets

     $$F(\bar x, q^x, X) -F(\bar y, q^y, -Y)+  h(\bar x,q^x)-h(\bar y, q^y) \leq  -{\lambda c\over 2} M^{\alpha+1}|\bar x-\bar y|^{-\hat \tau}.
     $$     
Precisely one needs to take $\delta$ such that 
            $ c_2 \delta^{-\tau_2+\hat \tau} + c_3 \delta^{\tau-\tau_3}+ c_4 \delta^{\tau-\tau_4} + \Lambda c_1 \delta^{\tau-\tau_1} < {\lambda  c\over 2}$.     
Finally, one can conclude as  follows 
       
\begin{eqnarray*}
       f(\bar x) &\leq& F(\bar x, q^x, X)+ h(\bar x, q^x) \\
       &\leq & F(\bar y, q^y, -Y)+ h(\bar y, q^y)-{\lambda c\over 2} M^{\alpha + 1}|\bar x-\bar y|^{-\hat \tau}\\
       &\leq &-{\lambda c\over 2}  M^{\alpha + 1}|\bar x-\bar y|^{-\hat \tau} + g(\bar y).
\end{eqnarray*}
This contradicts the fact that $f$ and $g$ are bounded, 
as soon as $\delta$ is small or $M$ is large enough.  
And then in order to get the desired result it is sufficient to prove in all cases  (\ref{vpneg}), (\ref{autresvp}), (\ref{eqqx}), (\ref{eqF}),  (\ref{eqh}).

So to prove the claims, we will use  inequality (\ref{ine})  which has three  important consequences for $\Theta( X+Y-2(2M+1)  {\rm I}) \Theta $:

\begin{enumerate}
\item As is well known the second inequality in (\ref{ine}) gives \\
$(X+Y-2(2M+1){\rm I}) \leq 0$, then  also  $  \Theta (X+Y-2(2M+1) {\rm I}) \Theta \leq 0$.
In particular\begin{equation}\label{autrevp}
\mbox{ all the eigenvalues of } \  \Theta (X+Y) \Theta \ \mbox{ are less than} 
 \ 6 M  |\Theta |^2.
 \end{equation}
   
\item By Proposition \ref{prop4}, $\Theta (\bar H) \Theta$ has a large negative eigenvalue, 
let $e$ be the corresponding eigenvector. 
Multiplying by     $\left(\begin{array}{c} e\\ -e\end{array} \right)$ on the right and   by its transpose on the left 
of (\ref{ine}), one gets, using (\ref{pleq4}) that, for some positive 
constant $c$, when  $\alpha \leq 2$,
    \begin{equation}\label{ppvp}
     \mu_1 ( \Theta (X+Y-2(2M+1) {\rm I} ) \Theta ) \leq c M^{1+\alpha}\omega^{\prime \prime }(|\bar x-\bar y|)  (\omega ^\prime (|\bar x-\bar y|) )^\alpha;
     \end{equation} 
this in particular implies that $$
\mu_1 ( \Theta (X+Y) \Theta )  \leq c M^{1+\alpha}\omega^{\prime \prime }(|\bar x-\bar y|)  (\omega ^\prime (|\bar x-\bar y|) )^\alpha +6M|\Theta|^2.
$$
When $\alpha \geq 2$, if (\ref{eqNepsilon}) holds, using (\ref{p>4}),
      \begin{equation}\label{ppvp1} \mu_1 (  \Theta (X+Y ) \Theta ) \leq c M^{1+\alpha}  \omega^{\prime \prime }(|\bar x-\bar y|)  (\omega ^\prime (|\bar x-\bar y|))^\alpha|\bar x-\bar y|^{2\epsilon}+ 6M |\theta |^2.
      \end{equation}

\item Finally, using (\ref{ine}), we obtain an upper bound for $|X|+ |Y|$
i.e.
\begin{equation}\label{XY}
 |X| + |Y| \leq C M( |D^2g|+1).
 \end{equation}
 remarking that $|\bar H| \leq {3\over 2} |D^2 g(x)|$. 
\end{enumerate}

We will need to detail the cases  $\omega(s)\simeq s$ or $\omega(s)= s^{\gamma}$ both when   $\alpha \leq 2$ or $\alpha \geq 2$.

\medskip

\noindent {\bf  Proofs of  the claims when  $\omega(r)=r^\gamma$ and $\alpha \leq 2$.}

In this case,  $\omega (s) = s^\gamma$, $\omega^\prime (s) = \gamma s^{\gamma-1}$ and $\omega^{\prime \prime} (s) =- \gamma (1-\gamma)s^{\gamma-2}$, $q = M\gamma |\bar x-\bar y|^{\gamma-1} {\bar x-\bar y\over |\bar x-\bar y|}$, $q^x = q+ 2M (\bar x-x_o)$, $q^y = q-2M (\bar y-x_o)$. 
By    (\ref{ppvp}), since $\gamma\in (0,1)$,
$ \Theta (X+Y-2(2M+1){\rm I} ) \Theta $ has one eigenvalue less than 
$$
  -{\gamma(1-\gamma)\over 4}  M^{\alpha+1} |\bar x-\bar y|^{\gamma-2+ (\gamma-1) \alpha}.
$$

While 
$6 |\Theta |^2 \leq 6M^\alpha \gamma^\alpha  |\bar x-\bar y|^{(\gamma-1) \alpha}$. Consequently,    
as soon as $\delta$ is small enough, 
$ \Theta (X+Y ) \Theta $  has at least one eigenvalue 
less than 
$ -{\gamma(1-\gamma)\over 4}  M^{\alpha+1} |\bar x-\bar y|^{\gamma-2+ (\gamma-1) \alpha}+ M|\Theta |^2 \leq  -{\gamma(1-\gamma)\over 4}  M^{\alpha+1} |\bar x-\bar y|^{\gamma-2+ (\gamma-1) \alpha}+6M^{1+\alpha} \gamma^\alpha  |\bar x-\bar y|^{(\gamma-1) \alpha}\leq   -\gamma{1-\gamma\over 8} M^{\alpha+1} |\bar x-\bar y|^{\gamma-2+ (\gamma-1) \alpha}$.
This proves (\ref{vpneg}) with $\hat\tau=2-\gamma+ (1-\gamma) \alpha>\tau_1$, and $c = \gamma{1-\gamma\over 8} $. 
 
 Now using (\ref{autrevp}) and   the above estimate on $M |\Theta|^2$, 
(\ref{autresvp}) holds with $\tau_1 =( 1-\gamma) \alpha$.

Note now that $$ |D^2 g(\bar x-\bar y)| \leq  \gamma (N-\gamma)  |\bar x-\bar y|^{\gamma-2}, $$  and recall that 
$|\bar H | \leq {3\over 2} |D^2 g|$, and 
then, by (\ref{XY}),
\begin{equation}\label{XYhold}
|X|+ |Y| \leq 6\gamma (N-\gamma+3) M|\bar x-\bar y|^{\gamma-2}.
\end{equation}

Consequently (\ref{eqF}) holds with   $\tau_2 = (2-\gamma) + (1-\gamma) \alpha-\gamma_F$ and $c_2 = 12 c_{\gamma_F} \gamma^{1+\alpha} (N+3-\gamma)$ using hypothesis (\ref{gammaF}).   
       
To prove (\ref{eqqx}) we will  use the  following universal inequality :  For any  $Z$ and $T$ in $\R^N$ 
 \begin{equation}\label{ZT}
       ||Z|^\alpha -|T|^\alpha |\leq  \sup (1, \alpha) |Z-T|^{\inf (1, \alpha)} (|Z| + |T|)^{ (\alpha-1)^+}
 \end{equation} 
 in the form
       $$ ||q^x|^\alpha -|q|^\alpha| \leq 2^\alpha  \sup (1, \alpha) M^\alpha |\bar x-\bar y  |^{(\gamma-1) (\alpha-1)^+)}.$$
       Hence using (\ref{XYhold}),  (\ref{eqqx}) holds with 
        $\tau_3 = (2-\gamma) + (1-\gamma)  (\alpha-1)^+ $, and $c_3 =c_F 2^{1+\alpha}  (\gamma+1)^{ (\alpha-1)^+}$. 
         Finally (\ref{eqh}) holds with $\tau_4 = (1-\gamma) (1+ \alpha) $ and $c_4 = 2c_{h, \Omega}((\gamma+3)^{1+\alpha} + 1)$ . 
       
\medskip
\noindent{\bf  Proofs of the claims  when $\omega(r)=r^\gamma$ and $\alpha \geq 2$}. 
The function $\omega$ is the same than in the previous case. In order to use the result in Proposition \ref{prop4} we need  (\ref{eqNepsilon})  to be  satisfied.  For that aim  
we take $\epsilon >0$ such that $\epsilon <\inf ( {\gamma_F\over 2}, {1-\gamma\over 2})$. 
Let
 \begin{equation}\label{eq21}
  \delta_N := \exp ({-\log
(2N(4-\gamma))+ \log (1-\gamma) \over 2\epsilon} ),
\end{equation}
and assume $\delta < \delta_N$. 
In    particular,    since there exists $i\in [1,N]$ such that 
$$|\bar
x_i-\bar y_i|^2 \geq{ |\bar x-\bar y|^2\over N}\geq |\bar x-\bar
y|^{2+ 2\epsilon},$$ 
for $\alpha \geq 2$, using the definition of
$\delta_N$ in  (\ref{eq21}),  $I(\bar x-\bar y,\epsilon)\neq
\emptyset$. Furthermore  for $|\bar x-\bar y| < \delta \leq \delta_N$

        \begin{eqnarray*}
                  {1\over 2}   \omega^{\prime \prime } (|\bar x-\bar y|)(1-N
|\bar x-\bar y|^{2\epsilon} )  &+& {3N\over 2}  |\bar x-\bar
y|^{2\epsilon} {\omega^\prime (|\bar x-\bar y|)\over |\bar x -\bar
y|} \\
  & \leq & {1\over 2}   \omega^{\prime \prime }
(|\bar x-\bar y|) \\
&&+  {N\over 2} |\bar x-\bar y|^{2\epsilon} (\gamma
(1-\gamma) + 3\gamma) |\bar x-\bar y|^{\gamma-2}
\\
  &\leq&    {1\over 4} \gamma (\gamma-1) |\bar
x-\bar y|^{\gamma-2}  \\
&= & {\omega^{\prime \prime} (|\bar x-\bar y|)\over 4} ,
 \end{eqnarray*} 
and then (\ref{eqNepsilon}) is satisfied. 
We are in a position to apply  (\ref{ppvp1}), and    $\Theta (X+Y)   \Theta $ has at least one eigenvalue $\mu_1$  less than 
$ -(\gamma{1-\gamma\over 4})M^{\alpha+1} |\bar x-\bar y|^{\gamma-2+ (\gamma-1) \alpha+ \epsilon}+ 6 M |\Theta |^2$, hence 
  \begin{eqnarray*}
 \mu_1 
& \leq &-  ({\gamma(1-\gamma)\over 4})M^{\alpha+1} |\bar x-\bar y|^{\gamma-2+ (\gamma-1) \alpha+ \epsilon}\\
  &+& 6M^{1+\alpha} \gamma^\alpha |\bar  x-\bar y|^{(\gamma-1) \alpha}\\
  &\leq & -  ({\gamma(1-\gamma)\over 8})M^{\alpha+1} |\bar x-\bar y|^{\gamma-2+ (\gamma-1) \alpha+ \epsilon}
 \end{eqnarray*}
 for $|\bar x-\bar y| \leq \delta$ small enough, hence ({\ref{vpneg}) holds with $\hat \tau = 2-\gamma + (1-\gamma) -\epsilon$. 

 Note that (\ref{autresvp}),  (\ref{eqF})   (\ref{eqqx}) and  (\ref{eqh})  have already been proved in the previous case, since   the  sign of  $\alpha-2$  does not play a role. 
Recall then that  $\tau_1 = (-\gamma+1) \alpha$,  and    $c_1 = 6 \gamma^{1+\alpha} (N-\gamma+3)$, while 
        $\tau_2 = (2-\gamma) + (1-\gamma) \alpha-\gamma_F< \hat \tau$ by the choice of $\epsilon$, and $c_2 = 12 c_{\gamma_F} \gamma^{1+\alpha} (N+3-\gamma)$.  
    
    Finally $\tau_3 = (2-\gamma) + (\alpha-1)(\gamma-1)$ and $c_3 =c_F 2^{1+\alpha}  (\gamma+1)^{ (\alpha-1)^+}$, and  (\ref{eqh}) still holds with $\tau_4 =   (1-\gamma) (1+ \alpha) $.

\medskip
Let us observe that in the hypothesis of Theorem \ref{lip1}  we have proved that $u$ 
and $v$ satisfy, for any $\gamma\in (0,1)$,  
\begin{equation}\label{holder}
u(x) \leq v(y) + \sup_{\Omega}  (u-v) + c_{\gamma, r}|x-y|^\gamma
\end{equation}  
This will be used in the next cases. 

\noindent{\bf  Proofs of the claims  when $\omega(r)\simeq r$ and $\alpha \leq 2$.}
We choose $\tau \in (0,  \inf (\gamma_F,{1\over 2},{ \alpha\over 2}))$ and  $\gamma\in (1, {\tau\over \inf ({1\over 2},{ \alpha\over 2})})$. We define 
 $ \omega(s) = s-\omega_o s^{1+\tau}$,
 where $s < s_o=\left( {1\over (1+\tau) \omega_o}\right)^{1\over \tau}$ and 
 $\omega_o$ is chosen so that  $s_o>1$. 
We suppose that $\delta^\tau \omega_o (1+\tau) < {1\over 2}$, which ensures that 
 
\begin{equation}\label{omegaprime}
 {\rm for} \ s < \delta   \    {1\over 2} \leq \omega^\prime (s) < 1, \ \omega(s) \geq {s\over 2} .
\end{equation}
We  suppose that 

             \begin{equation}\label{Mrlip}
             M {\delta \tau\over  (1+\tau)} > 2\sup u,\ M>1 \   {\rm and}\ M ({1-r\over 2} )^2 > 2\sup u.
             \end{equation} 
which implies in particular  (\ref{Mr}).  

 Here $|D^2 g(\bar x-\bar y)| \leq {N-1\over |\bar x-\bar y| } + \omega_o \tau (1+ \tau)|\bar x -\bar y|^{-1+ \tau }\leq (N-1+ \omega_o \tau (1+ \tau)) |\bar x-\bar y|^{-1}$, $|\bar H| \leq {3\over 2} |D^2 g(\bar x-\bar y)|$   and then 
 (\ref{XY}) is nothing else but 
 \begin{equation} \label{XYlip}
  |X| + |Y| \leq  6M (|D^2 g(\bar x-\bar y)| + 1) \leq 6M(N-1+ \omega_o \tau (1+ \tau)) |\bar x-\bar y|^{-1}.
  \end{equation} 
 Furthermore  $q=M \omega^\prime (|\bar x-\bar y|) {\bar x-\bar y\over |\bar x-\bar y|} $
   $q^x =q+ 2M (\bar x-x_o), \            
                  q^y =q  -2M (\bar y-x_o)$.

Using (\ref{holder}) in $B_{1+r\over 2}$, for 
all $\gamma <1$, 
$$ M|\bar x-x_o|^2 + M |\bar y-x_o|^2 + \sup (u-v) \leq u(\bar x)-v(\bar y) \leq \sup (u-v) + c_{\gamma, r} |\bar x-\bar y|^\gamma$$ and then 

\begin{equation}\label{x-xo}
|\bar y-x_o|+  |\bar x-x_o|\leq \left({c_{\gamma,  r} |\bar x-\bar y|^\gamma\over M}\right)^{1\over 2}.
 \end{equation}  Then  taking 
 $\delta$ small enough,  more precisely if $(c_{ \gamma,  r} \delta^\gamma)^{1\over 2} < {1\over 4}$ by (\ref{omegaprime}), 
                                   
\begin{equation}\label{qx}
 {M\over 2} \leq |q| \leq M, \ 
                                   {M\over 4} \leq |q^x|, |q^y| \leq {5M\over 4}
\end{equation}
Then we derive from (\ref{ppvp})  that 
$ \Theta (X+Y-2(2M+1) {\rm I} ) \Theta$ has at least  one eigenvalue  less than  
                 \begin{equation}\label{c13}
                 - {\omega_o \tau (1+ \tau)\over 4}  M^{\alpha +1} |\bar x-\bar y|^{\tau-1}
                 \end{equation} 
Since $M |\Theta |^2 \leq  M^{1+\alpha} $,    (\ref{autresvp}) holds with 
$\tau_1 = 0 < 1-\tau$,  and $c_1 = 6$, while ({\ref{eqF}) is satisfied with 
$\tau_2 = -\gamma_F +1 < 1-\tau$, and $c_2 = c_{\gamma_F} (6+ 2\omega_o \tau (1+ \tau))$.

To check (\ref{eqqx}), by (\ref{ZT}),  (\ref{qx}),   (\ref{x-xo}),  and  (\ref{XYlip})
            $$ |\ |q_i^x|^{\alpha}-|q_i |^{\alpha} | \ |X_{ii}| \leq   (6+ 2\omega_o \tau (1+ \tau) ) M^{1+ {\inf (\alpha, 1)\over 2}}c_{  \gamma, r}^{\sup (1, \alpha) \over 2}    |\bar x-\bar y| ^{\inf (1, \alpha)  \gamma \over 2}  |\bar x-\bar y|^{-1}.$$ 
Hence,     by using $\inf (1,\alpha)  \gamma> 2 \tau$,   (\ref{eqqx}) holds with 
$\tau_3 = 1-{\inf(1, \alpha)\over 2} \gamma$ and $c_3 =2c_F (6+ 2\omega_o \tau (1+ \tau) )(c_{ \gamma,  r})^{\alpha\over 2} $ if $\alpha \leq 1$ and 
$c_3 = 2c_F(6+ 2\omega_o \tau (1+ \tau) c_{\gamma,  r}^{1\over 2} \alpha 3^{\alpha-1})$ if $\alpha \geq 1$. 

Finally $\tau_4 = 0 $ and $c_4 = c_{h, \Omega} (2^{1+\alpha} +1)$    are convenient for   (\ref{eqh}).         
 
 \medskip      
\noindent{\bf  Proofs of the claims   when $\omega(r)\simeq r$ and $\alpha \geq 2$}

In order to use the result in  Proposition \ref{prop4} we need  (\ref{eqNepsilon})  to be  satisfied.  For that aim  
we take $\tau$ and $\epsilon >0$ such that

 \begin{equation}\label{epsilonlip}
              0< \tau  <  \inf({1\over \alpha}, \gamma_F), \   1>\gamma > \tau\alpha, \ {\rm and }  \ 
              {\tau\over 2} < \epsilon<\inf ({{\gamma\over 2}-\tau\over  \alpha-2}, {\gamma_F-\tau\over \alpha-2}) . 
\end{equation} 
        Let us define $\omega$, $s_o$, as in the case $\alpha\leq 2$.             
            We   suppose $\delta < \delta_N$ where  
            \begin{eqnarray}\label{deltaN}
           \delta_N:&=&   \inf \left(   \exp {\log (\omega_o (1+\tau) \tau)-\log (2N(1+\omega_o \tau (1+ \tau)))\over 2\epsilon-\tau}, \right.\nonumber\\
          &&  \left. \exp {-\log (2 \omega_o (1+\tau))\over \tau}  \right)
            \end{eqnarray}
            In particular  since
there exists $i$ such that 
            $|\bar x_i-\bar y_i|^2 \geq {1\over N} |\bar x-\bar y|^2
\geq|\bar x-\bar y|^{2+ 2\epsilon} $, by (\ref{deltaN}),  $I(\bar
x-\bar y, \epsilon)\neq \emptyset$. 
             Furthermore, recall that  by (\ref{deltaN}), $1\geq
\omega^\prime (|\bar x-\bar y|) \geq {1\over 2}$ and  
             \begin{eqnarray*}
              {1\over 2} \omega^{\prime \prime } (|\bar x-\bar y|) &+&
                {N\over 2}  \omega_o \tau (1+ \tau) |\bar x-\bar
y|^{\tau-1+ 2\epsilon} +  {3\over 2} N |\bar x-\bar y|^{2\epsilon-1}
\omega^\prime (|\bar x-\bar y|) \\
                &\leq & {1\over 2} \omega^{\prime \prime } (|\bar
x-\bar y|)+ {N\over 2} ( \omega_o \tau (1+ \tau)  +3) |\bar x-\bar
y|^{2\epsilon-1} \\
                &\leq& 
              -{1\over 4} \omega_o (1+\tau) \tau |\bar x-\bar y|^{-1+
\tau }  =   { \omega^{\prime \prime } (|\bar x-\bar y|)\over 4}, 
            \end{eqnarray*}
and then (\ref{eqNepsilon}) holds. We still assume that (\ref{Mrlip}) holds.

As in the case $\alpha \leq 2$, using (\ref{holder}), one has, for $\delta$ small enough, 
(\ref{qx}) still holds. 
             
The hypothesis (\ref{deltaN}) ensures that $\Theta (X+Y -2(2M+1){\rm I}  ) \Theta$ has at least one eigenvalue less than 
             $$-{\omega_o \tau (1+\tau)\over 4}  M^{1+\alpha} |\bar x-\bar y|^{-1+ \tau+(\alpha-2) \epsilon}$$
and then using the fact that 
$\Theta(X+Y) \Theta \leq 6 M|\Theta |^2 \leq 6 M^{1+\alpha} $, 
by (\ref{epsilonlip}) and for $\delta$ small enough, (\ref{vpneg}) holds with $\hat \tau = (2-\alpha )\epsilon
+1-\tau$ and $c ={\omega_o \tau (1+\tau)\over 8}$   . Furthermore (\ref{autresvp} )  holds with $\tau_1 = 0$, and  $c_1 =6$ 
              
 As in the previous case, (\ref{XYlip}) holds, and  then (\ref{eqF}) holds with $\tau_2 = 1-\gamma_F< 1-\tau+ (2-\alpha) \epsilon$ and $c_2 =c_{\gamma_F} (6+ 2 \omega_o \tau (1+ \tau))$ .            
             
        Now using  (\ref{ZT}), (\ref{XYlip}), (\ref{qx}), (\ref{x-xo}),  one has 
        $$||q^x|^\alpha -|q|^\alpha | |X| \leq \alpha (M|\bar x-x_o|) ({5M\over 4})^{\alpha-1}M |\bar x-\bar y|^{-1} \leq  c_3  |\bar x-\bar y|^{{\gamma\over 2}-1}    M^{1+\alpha}$$
         and then (\ref{eqqx}) holds with 
         $\tau_3 = 1-{\gamma\over 2} < 1-\tau+ (2-\alpha) \epsilon$ and $c_3 = 2 (c_{\gamma,  r})(2)^{\alpha-1}$. 
           
Note finally that 
$$ |h(\bar x, q^x )| + | h(\bar y,  q^y)  |\leq 2c_h  \left({5M \over 4}\right)^{1+\alpha}$$
and then  (\ref{eqh}) holds with $\tau_4 = 0$ and $c_4 = 2^{2+\alpha}  c_h$  . 
\bigskip

 \section{Existence of solutions.}
 
  As it is classical, see e. g. \cite{usr}, the existence's Theorem \ref{corexi} will be proved once the following Propositions  are known:

   \begin{prop}\label{comp}  Suppose that $\Omega$ is a bounded domain  in $\R^N$ and that $F$ satisfies  (\ref{degenel}),  (H3) , (\ref{gammaF}), (\ref{diffp}). Suppose that $h$ is continuous and  it satisfies (\ref{H}).  
 Let $u$ be  a USC sub-solution  of 
$$ F(x, \nabla u, D^2 u) + h(x, \nabla u)-\beta (u)\geq f\ \mbox{in}\ \Omega$$
       and $v$ be  a LSC super-solution  of 
$$F(x, \nabla v, D^2 v) + h(x, \nabla v)-\beta (v) \leq g\ \mbox{in}\ \Omega$$
where $\beta$, $f$ and $g$ are   continuous.

Suppose that either  $\beta$ is increasing and   $f\geq  g$, or  $\beta$ is nondecreasing and  $f> g$. 
If  $u\leq v$ on $\partial \Omega$, 
then $u\leq v$ in $\Omega$.
\end{prop}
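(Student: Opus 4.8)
The plan is to argue by contradiction along the standard doubling-of-variables scheme, using the regularity estimate of Theorem \ref{lip1} (or rather its Hölder analogue \eqref{holder}) to control the penalization term. Suppose $\sup_\Omega (u-v) =: m > 0$ (otherwise there is nothing to prove, since $u\le v$ on $\partial\Omega$). First I would reduce to the strict case: if $\beta$ is merely nondecreasing and $f>g$, one can argue directly; if $\beta$ is increasing and $f\ge g$, one replaces $v$ by $v_\eta := v + \eta$ for small $\eta>0$, noting $v_\eta$ is still a supersolution of the equation with $\beta(v)$ replaced by $\beta(v_\eta) \ge \beta(v) + c(\eta)$ for some $c(\eta)>0$ by strict monotonicity, so that effectively the right-hand sides become strictly ordered; then let $\eta\to 0$ at the end. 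So assume from now on $\beta(u) - \beta(v) $ carries a strictly favorable sign whenever $u(x)\ge v(y)$ with $x,y$ close, i.e. there is $\theta_0>0$ with $f - g + \beta(v) - \beta(u) \le -\theta_0 < 0$ on the relevant region.

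Next I would perform the doubling: for $j$ large consider
$$\psi_j(x,y) = u(x) - v(y) - \frac{j}{q}|x-y|^q$$
with exponent $q = \frac{\alpha+2}{\alpha+1}$, which is exactly the exponent appearing in hypothesis (H3), \eqref{depenx}. Let $(\bar x,\bar y)=(\bar x_j,\bar y_j)$ be a maximum point over $\overline\Omega^2$. By the classical lemma on doubling, $j|\bar x_j-\bar y_j|^q \to 0$, $|\bar x_j - \bar y_j|\to 0$, and $\psi_j(\bar x_j,\bar y_j)\to m>0$; combined with $u\le v$ on $\partial\Omega$ this forces $\bar x_j,\bar y_j$ to lie in a fixed compact subset $\Omega'\subset\subset\Omega$ for $j$ large. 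Here is the one point where the Hölder estimate \eqref{holder} enters: it gives $u(\bar x_j)-v(\bar y_j) \le m + c\,|\bar x_j-\bar y_j|^\gamma$ for $\gamma$ close to $1$, which — together with the maximum inequality $\frac{j}{q}|\bar x_j-\bar y_j|^q \le u(\bar x_j)-v(\bar y_j)-m$ — yields the refined bound $j|\bar x_j-\bar y_j|^q \le c\,|\bar x_j-\bar y_j|^\gamma$, hence $m_j := j|\bar x_j-\bar y_j|^{q-1}$ satisfies $m_j|\bar x_j-\bar y_j|^{\frac{\alpha+2}{\alpha+1}} = m_j|\bar x_j-\bar y_j|^q \to 0$, so the correction terms $\omega_F(m_j|\bar x_j-\bar y_j|^q) + o(m_j|\bar x_j-\bar y_j|^q)$ in \eqref{depenx} tend to $0$.

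Then I would apply the Theorem of Sums (Ishii's lemma, as in Lemma \ref{lem2} with $\Phi(x,y) = \frac{j}{q}|x-y|^q$): there exist $X_j, Y_j\in S$ with $(p_j, X_j)\in \bar J^{2,+}u(\bar x_j)$, $(p_j, -Y_j)\in \bar J^{2,-}v(\bar y_j)$, where $p_j = j|\bar x_j-\bar y_j|^{q-2}(\bar x_j-\bar y_j)$, and $(X_j, Y_j)$ satisfies an inequality of the form \eqref{eqXY} with $m\sim m_j$. Both $\bar x_j,\bar y_j$ being interior points, the sub/supersolution inequalities give
$$f(\bar x_j) - \theta_0 \le F(\bar x_j, p_j, X_j) + h(\bar x_j,p_j) - \beta(u(\bar x_j)) - f(\bar x_j) + \text{(sign)} \le \cdots,$$
more precisely, subtracting,
$$0 < \theta_0 \le F(\bar x_j, p_j, X_j) - F(\bar y_j, p_j, -Y_j) + h(\bar x_j, p_j) - h(\bar y_j, p_j) + \big(\beta(v(\bar y_j)) - \beta(u(\bar x_j))\big) + (g-f).$$
The term $h(\bar x_j,p_j)-h(\bar y_j,p_j)\to 0$ because $h$ is continuous and $|\bar x_j-\bar y_j|\to 0$ while $p_j$ stays in a fixed region (when $m\to\infty$ one instead keeps $p_j$ as the relevant scale and uses \eqref{H}; actually $|p_j| = m_j$ which may blow up, so this needs the standard trick of splitting $F(\bar x_j,p_j,X_j) - F(\bar y_j,p_j,-Y_j) = [F(\bar x_j,p_j,X_j)-F(\bar y_j,p_j,X_j)] + [F(\bar y_j,p_j,X_j) - F(\bar y_j,p_j,-Y_j)]$, controlling the first bracket by \eqref{gammaF} and the size bound $|X_j|\le C m_j$, and the second by \eqref{depenx}). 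The main obstacle is precisely this bookkeeping: showing that the $x$-dependence error $c_{\gamma_F}|\bar x_j-\bar y_j|^{\gamma_F}|p_j|^\alpha|X_j|$ is $o(1)$, which requires the improved decay $j|\bar x_j-\bar y_j|^q\lesssim |\bar x_j-\bar y_j|^\gamma$ with $\gamma$ chosen close enough to $1$ relative to $\gamma_F$, together with the $|X_j|\le Cm_j$ bound from the Theorem of Sums — this is exactly the mechanism already exploited in the proof of Theorem \ref{lip1}, so it transfers. Once all error terms are shown to vanish, letting $j\to\infty$ gives $0<\theta_0\le 0$, a contradiction, and therefore $m\le 0$, i.e. $u\le v$ in $\Omega$. $\hfill\square$
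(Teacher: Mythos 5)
The paper does not actually prove Proposition \ref{comp}: immediately after stating it (and Proposition \ref{exi}) it says ``The proofs of these two Propositions can be done by using the classical tools, see \cite{usr}.'' So your write-up is genuine original work, and the overall strategy — doubling of variables, Ishii's lemma, subtraction of the two viscosity inequalities, and then controlling the error terms via the structure conditions — is the right one. You also correctly spot the non-standard ingredient that makes this case delicate: for $\alpha>0$ the exponent $\tfrac{\alpha+2}{\alpha+1}$ in (H3) is strictly less than $2$, so the usual doubling decay $j|\bar x_j-\bar y_j|^2\to 0$ does \emph{not} by itself force $j|\bar x_j-\bar y_j|^{\frac{\alpha+2}{\alpha+1}}\to 0$; an a priori regularity estimate on $u-v$ from Section 3 is really needed. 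That observation is not in \cite{usr} and is the heart of the matter, so credit for it.

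That said, two steps as written do not hold up. First, the penalty exponent $q=\tfrac{\alpha+2}{\alpha+1}$ is a misreading of (H3): the hypothesis is formulated for gradients of the form $p=m(x-y)$, i.e.\ for the standard quadratic penalty $\tfrac{j}{2}|x-y|^2$; the exponent $\tfrac{\alpha+2}{\alpha+1}$ appears only in the \emph{output} bound, not in the test function. Moreover $|x-y|^q$ with $q<2$ is not ${\cal C}^2$ across $\{x=y\}$, so Lemma \ref{lem2}/Ishii's theorem of sums cannot be invoked without first ruling out $\bar x_j=\bar y_j$; with $q=2$ there is no such issue. Second, the Hölder bound \eqref{holder} with $\gamma<1$ is not strong enough to close the argument: from $\tfrac{j}{2}|\bar x_j-\bar y_j|^2\le c|\bar x_j-\bar y_j|^\gamma$ you only get $|p_j|=j|\bar x_j-\bar y_j|\lesssim j^{(1-\gamma)/(2-\gamma)}\to\infty$, and then the lower-order term $h(\bar x_j,p_j)-h(\bar y_j,p_j)$ is not controlled — \eqref{H} is a growth bound, not a modulus in $x$, so there is no reason for this difference to vanish when $|p_j|\to\infty$. (The splitting of the $x$-dependence of $F$ via \eqref{gammaF} suffers from the same problem: $|\bar x_j-\bar y_j|^{\gamma_F}|p_j|^\alpha|X_j|\sim j^{1-\gamma_F}\to\infty$ under the Hölder scaling.) The fix is to use the full Lipschitz estimate of Theorem \ref{lip1}, which is already available because its proof does not rely on Proposition \ref{comp}: one folds $\beta(u)$, $\beta(v)$ into the (still bounded) forcing terms, obtains $u(\bar x_j)-v(\bar y_j)\le \sup(u-v)+C|\bar x_j-\bar y_j|$, hence $j|\bar x_j-\bar y_j|\le 2C$ bounded. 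Then $|p_j|$ stays in a compact set so the $h$-difference vanishes by uniform continuity, and $j|\bar x_j-\bar y_j|^{\frac{\alpha+2}{\alpha+1}}=\bigl(j|\bar x_j-\bar y_j|\bigr)\,|\bar x_j-\bar y_j|^{\frac{1}{\alpha+1}}\to 0$ so (H3) gives the contradiction cleanly, with no need for the ad hoc splitting of $F$.

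Minor: the display with ``$f(\bar x_j)$'' on both sides and the placeholder ``(sign)'' is garbled; the clean inequality after subtraction is
\begin{equation*}
f(\bar x_j)-g(\bar y_j)+\beta\bigl(u(\bar x_j)\bigr)-\beta\bigl(v(\bar y_j)\bigr)\le \bigl[F(\bar x_j,p_j,X_j)-F(\bar y_j,p_j,-Y_j)\bigr]+\bigl[h(\bar x_j,p_j)-h(\bar y_j,p_j)\bigr],
\end{equation*}
whose left side is bounded below by a positive constant for $j$ large (either by $f>g$ or by the strict monotonicity of $\beta$ together with $u(\bar x_j)-v(\bar y_j)\ge m/2>0$), while the right side tends to $0$ as above. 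Your reduction to the strict case by replacing $v$ with $v+\eta$ is fine once $u,v$ are known to be bounded, since $\beta(s+\eta)-\beta(s)\ge c(\eta)>0$ uniformly on compact $s$-intervals for $\beta$ continuous and increasing.
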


     \begin{prop}\label{exi}
      Suppose that the assumptions in Proposition \ref{comp} hold, and  that $f$ is continuous and bounded and $\beta$ is increasing.  
      If  $\underline{u}$ is   a USC sub-solution, and $\overline{u}$ is  a LSC super-solution of the equation
        
        $$F(x, \nabla u, D^2 u) + h(x, \nabla u) -\beta (u)= f, \   {\rm in } \ \Omega,$$
    such  that $\underline{ u}= \overline{ u}= \varphi$ on $\partial \Omega$. Then there exists $u$ a viscosity solution of the equation with 
         $\underline{ u}\leq u\leq \overline{ u}$ in $\Omega$, and $u = \varphi$ on $\partial \Omega$.
         \end{prop}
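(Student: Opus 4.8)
The plan is to run Perron's method in the form due to Ishii. Let ${\cal S}$ denote the set of all USC subsolutions $w$ of the equation satisfying $\underline u\le w\le\overline u$ in $\Omega$; this set is nonempty since $\underline u\in{\cal S}$. Define
$$u(x):=\sup\{\,w(x): w\in{\cal S}\,\}.$$
By construction $\underline u\le u\le\overline u$ in $\Omega$, so $u$ is locally bounded, and since $\underline u=\overline u=\varphi$ on $\partial\Omega$ we get $u=\varphi$ on $\partial\Omega$. It then remains to prove that $u$ is a viscosity solution of the equation.

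First I would check that the USC envelope $u^*$ is still a subsolution: this is the classical stability of viscosity subsolutions under locally bounded suprema, which uses only the continuity of $F$ and $h$, so that the degeneracy of $F$ plays no role; at a point where the relevant test function has vanishing gradient no extra argument is needed, because $F(x,0,X)=F(x,p,0)=0$, so the subsolution inequality there reduces to one involving only $h(x,0)$ and $f$, and this is inherited from the elements of ${\cal S}$. The core step is to show that the LSC envelope $u_*$ is a supersolution. Arguing by contradiction, suppose that some $\phi\in{\cal C}^2$ touches $u_*$ from below at a point $x_0\in\Omega$ with $F(x_0,\grad\phi(x_0),D^2\phi(x_0))+h(x_0,\grad\phi(x_0))-\beta(\phi(x_0))>f(x_0)$. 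Since $\overline u$ is a supersolution, this strict inequality forces $u_*(x_0)<\overline u(x_0)$; hence, using the lower semicontinuity of $\overline u$ and continuity of $\phi$, for $\gamma>0$ and $\rho>0$ small the function $\phi_\gamma(x):=\phi(x)+\gamma-\rho^{-1}|x-x_0|^2$ is a classical (hence viscosity) subsolution on $B(x_0,\rho)$, stays below $\overline u$ there, and is dominated by $u$ near $\partial B(x_0,\rho)$. Gluing $\phi_\gamma$ with $u$ (taking the maximum inside $B(x_0,\rho)$ and $u$ outside) produces a function in ${\cal S}$ which, along a sequence realizing $u_*(x_0)=\liminf_{x\to x_0}u(x)$, strictly exceeds $u$, contradicting the maximality in the definition of $u$. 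Therefore $u_*$ is a supersolution.

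Finally I would invoke the comparison principle, Proposition~\ref{comp}, applied to the USC subsolution $u^*$ (of $\ge f$), the LSC supersolution $u_*$ (of $\le f$), the increasing function $\beta$, and $f\ge f$; since $u^*=u_*=\varphi$ on $\partial\Omega$ it yields $u^*\le u_*$ in $\Omega$. As $u_*\le u^*$ always holds, we conclude $u^*=u_*=u$, so $u$ is continuous and is simultaneously a sub- and a supersolution, i.e. a viscosity solution, with $\underline u\le u\le\overline u$ in $\Omega$ and $u=\varphi$ on $\partial\Omega$. The only genuinely delicate point is the bump construction for $u_*$: one must be sure the perturbed test function $\phi_\gamma$ is admissible also at points where its gradient vanishes and that the glued function is still a subsolution; the first is immediate here since $F(x,0,\cdot)\equiv 0$ (the inequality there only constrains $h$ and $f$, both continuous), and the second is the standard "maximum of subsolutions" property, so the proof is indeed classical once Proposition~\ref{comp} is available.
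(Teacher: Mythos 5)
Your argument is exactly the classical Ishii--Perron scheme from the User's Guide \cite{usr}, which is precisely what the paper invokes (it gives no explicit proof and simply says the result follows from ``the classical tools''). One small point worth being aware of, which the paper's statement also glosses over: to apply Proposition \ref{comp} to $u^*$ and $u_*$ you need $u^*\leq u_*$ on $\partial\Omega$, which strictly speaking requires $\underline u_*=\overline u^*=\varphi$ on $\partial\Omega$ (not merely $\underline u=\overline u=\varphi$ pointwise), but this is automatic in the application via the barriers built in Proposition \ref{super}.
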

             The proofs of these two Propositions can be done by  using the classical tools, see  \cite{usr}. 
       \begin{rema}
        One can get the same existence's result when $\beta =0$, by using a standard approximation procedure and the stability of viscosity solutions. 
\end{rema}
       
       Nevertheless  the  proof of Theorem \ref{corexi} requires the existence of a super-solution which is zero on the boundary when $\beta = 0$ : 
         
         \begin{prop}\label{super}
 Suppose that $\Omega$ is a bounded ${\cal C}^2$ domain, and that $F$ and $h$ satisfy the hypothesis in Proposition \ref{exi}. Then for any $f$ continuous and bounded,   there exist a super-solution and a sub-solution of  of 
$$ F(x, \nabla u, D^2 u) + h(x, \nabla u)=  f\ \mbox{in}\ \Omega$$ which  are  zero on the boundary. 
\end{prop}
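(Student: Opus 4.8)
The plan is to construct explicit barriers using the distance function to $\partial\Omega$. Since $\Omega$ is a bounded $\mathcal C^2$ domain, the function $d(x) = \operatorname{dist}(x,\partial\Omega)$ is $\mathcal C^2$ in a tubular neighborhood $\{d < d_0\}$ of $\partial\Omega$, with $|\nabla d| = 1$ there and $D^2 d$ bounded. I would look for a super-solution of the form $w(x) = \varphi(d(x))$ where $\varphi:[0,\infty)\to\R$ is smooth, increasing, concave, with $\varphi(0) = 0$; a natural choice is $\varphi(t) = A(1 - e^{-kt})$ (or $\varphi(t)= A t - B t^2$ truncated), with $A, k$ large constants to be fixed. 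Near the boundary, $\nabla w = \varphi'(d)\nabla d$, so $|\nabla w| = \varphi'(d) \neq 0$ and the operator is non-degenerate there, while $D^2 w = \varphi''(d)\nabla d\otimes\nabla d + \varphi'(d) D^2 d$. Using (H1)/(\ref{degenel}) to sandwich $F$ between the pseudo-Pucci operators, and the homogeneity-type growth, one estimates $F(x,\nabla w, D^2 w) \le \Lambda\, \mathrm{tr}(\Theta_\alpha(\nabla w) (D^2 w)^+ \Theta_\alpha(\nabla w)) - \lambda\,\mathrm{tr}(\Theta_\alpha(\nabla w)(D^2 w)^- \Theta_\alpha(\nabla w))$, and the dominant term is the concave part $\varphi''(d) |\varphi'(d)|^\alpha |\partial_i d|^{\alpha}$ summed over $i$, which is negative and large in absolute value when $k$ is large. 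Together with the bound (\ref{H}) on $h$, namely $|h(x,\nabla w)| \le c_{h,\Omega}(|\varphi'(d)|^{1+\alpha} + 1)$, and $|f| \le \|f\|_\infty$, choosing $k$ large enough (so the good negative term beats the $h$-term and $f$) gives $F(x,\nabla w, D^2 w) + h(x,\nabla w) \le f$ in $\{0 < d < d_0\}$ in the viscosity sense.

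Away from the boundary, in the compact set $\{d \ge d_0\}$, the barrier $w$ need no longer be given by $\varphi(d)$; instead I would take $w$ there to be a large constant $\ge \varphi(d_0)$ minus a controlled perturbation, or more simply define the global super-solution as $\overline u(x) = \min(\varphi(d(x)), \varphi(d_0) + C(\text{something}))$, arranged so that $\overline u$ is continuous, equals $0$ on $\partial\Omega$, and is a viscosity super-solution across the gluing set (the minimum of two super-solutions is a super-solution, so one only needs each piece to be a super-solution on its domain of definition and the values to match up). Concretely: on $\{d \ge d_0/2\}$ one can use a multiple of a function like $C_1 - C_2|x - x_1|^2$ for a suitable interior point — but since $|\nabla|$ may vanish at its max, one must instead translate so that the relevant region avoids the critical point, or invoke $F(x,q,0) = 0$ together with continuity; the cleanest route is to keep $\overline u$ strictly increasing in $d$ throughout, e.g. $\varphi(t) = A(1 - e^{-kt})$ on all of $\overline\Omega$ with $d$ replaced by a globally $\mathcal C^2$ function $\tilde d$ agreeing with $\operatorname{dist}(\cdot,\partial\Omega)$ near $\partial\Omega$, positive inside, with $|\nabla \tilde d|$ bounded below only near $\partial\Omega$; on the interior region $\Theta_\alpha(\nabla w)$ may degenerate, but there the concave term $\varphi'' |\varphi'|^\alpha |\partial_i\tilde d|^\alpha \le 0$ still has the right sign and $|h|$, $|f|$ are bounded by constants, so by taking $A$ large (without changing that $\varphi'' < 0$) the inequality persists — one must only be careful that where $\nabla w = 0$ the operator term vanishes and then the inequality $0 + h - f \le 0$ might fail, which forces handling those finitely-structured critical sets separately by a local perturbation argument or a $\min$ with a translate. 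The sub-solution is obtained symmetrically by taking $-\overline u$ with the analogous construction, or directly $\underline u = -w$ with the roles of ${\cal M}^+_\alpha$ and ${\cal M}^-_\alpha$ exchanged.

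The main obstacle I expect is exactly the behavior at interior critical points of the barrier, where $\Theta_\alpha(\nabla w)$ degenerates completely and the good second-order term vanishes, so that the required inequality reduces to $h(x,0) - f(x) \le 0$, which has no reason to hold. The standard fix — and the one I would carry out — is to avoid a global single barrier: cover $\overline\Omega$ by the tubular neighborhood $\{d < d_0\}$ (where the distance-function barrier works) and note that on the remaining compact interior $K = \{d \ge d_0\}$ the solution $u$ we eventually compare with is already bounded, so it suffices to build, for each point, a super-solution that is large on $K$; taking the infimum over a family of translated barriers of the form $\varphi(\operatorname{dist}(x, \partial\Omega))$ centered so their critical points fall outside $\Omega$, or simply adding a large constant on $K$ and taking a $\min$, produces a continuous global super-solution vanishing on $\partial\Omega$. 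Verifying the viscosity super-solution property at the gluing interface is routine since the pointwise minimum of super-solutions is a super-solution, and the matching of boundary values is built into $\varphi(0) = 0$.
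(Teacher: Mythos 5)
Your plan and the paper's proof share the same skeleton --- a barrier of the form $\varphi(d)$ with $\varphi$ increasing and concave (the paper takes $\psi = M(1-(1+d)^{-k})$) --- but there is a genuine gap in how you handle the interior, and it is precisely the point you flagged as a ``main obstacle.''  You propose to replace the distance $d$ by a globally $\mathcal C^2$ regularisation $\tilde d$; this manufactures interior critical points where $\nabla w = 0$, where $F(x,0,\cdot)=0$ and the required inequality collapses to $h(x,0)\le f(x)$, and then you sketch several repairs (translated barriers, gluing by $\min$) without completing any of them.  The paper never smooths $d$.  It keeps $d=\operatorname{dist}(\cdot,\partial\Omega)$ itself and uses only that $d$ is \emph{semi-concave} on a bounded $\mathcal C^2$ domain, i.e.\ $D^2 d\le C_1\mathrm I$ in the viscosity sense.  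This is all the regularity that is needed: the barrier is a concave, increasing function of a semi-concave $d$, hence semi-concave, and at any point where a $\mathcal C^2$ test function touches it from below, $d$ is differentiable with $|\nabla d|=1$ (at points of the cut locus $d$ has a concave kink, so no test function can touch the barrier from below and the super-solution condition is vacuous).  Thus the gradient of the barrier never degenerates where it matters, the ``bad'' term $\Lambda\operatorname{tr}(\Theta_\alpha(\nabla\psi)(D^2\psi)^+\Theta_\alpha(\nabla\psi))$ is controlled by $C_1$, and the identity $\sum(\partial_i d)^2=1$ lets one bound $\sum|\partial_i d|^{2+\alpha}$ from below uniformly, which is exactly what makes the favourable concave term $-(k+1)\lambda\sum|\partial_i d|^{\alpha+2}$ dominate once $k$ is chosen large.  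The critical-point issue you worry about simply never arises.

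Concretely, to repair your argument you should drop the smoothed $\tilde d$, keep the true distance function on all of $\Omega$, state explicitly that $D^2 d\le C_1\mathrm I$ in the viscosity sense and that the computation with $\psi=M(1-(1+d)^{-k})$ is therefore legitimate in the viscosity sense, and then run the single global estimate: first fix $k$ so that
$$\lambda(k+1)N^{-\frac1{2+\alpha}}\ \ge\ 3N\Lambda C_1 + 2C_h(1+\operatorname{diam}\Omega),$$
which makes $F(x,\nabla\psi,D^2\psi)+h(x,\nabla\psi)$ bounded above by a strictly negative multiple of $(Mk)^{1+\alpha}$, and then choose $M$ large so this beats $-\|f\|_\infty$.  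The sub-solution is $-\psi$.  With this change your approach coincides with the paper's; without it, the gluing/translation argument you outline is neither carried out nor obviously correct.
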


\noindent{\em Proof of Proposition \ref{super} }: 

Let us recall that the distance to the boundary $d$ satisfies 
everywhere: $d$ is semi concave or equivalently  there exists $C_1$ such that 
                           $$D^2 d \leq C_1 {\rm I}. $$ 
                           In the following lines we will make the computations as if $d$ is ${\cal C}^2$, it is not difficult to see that the required inequalities  hold  also in the viscosity sense. 
                           We now choose $k$ large such that 
                           $$ (k+1) (\sum_1^N |\partial_i d|^{2+\alpha})^{1+\alpha\over 2+\alpha}  \geq 2C_1 N^{1+\alpha\over 2+\alpha}(1+ {\rm diam} \ \Omega). $$
                           This  can be done since 
                            $\sum_1^N(\partial_i d)^2 = 1 \leq (\sum_1^N |\partial_i d|^{2+\alpha})^{2\over 2+\alpha} N^{1+\alpha \over 2+\alpha}$. 
                            We will  choose later $M$ large and define 
                            $$\psi(x) = M (1-{1\over (1+ d)^k}). $$
                             Clearly 
$$\grad \psi = M {k \grad d  \over (1+ d)^{k+1}}, \  D^2 \psi ={ M k\over (1+ d)^{k+2}} ((1+ d) D^2d - (k+1) \grad d\otimes \grad d)$$
and then  choosing  $k$ such that $\lambda(k+1)N^{-\frac{1}{2+\alpha}}\geq 3N\Lambda C_1+ 2C_h(1+ {\rm diam} \ \Omega) $.
\begin{eqnarray*}
F(x, \grad\psi,D^2\psi)+ h(x, \nabla \psi) &\leq& {(Mk)^{\alpha+1}\over (1+d)^{k+2+ (k+1) \alpha}} \left((1+ d){\cal M}_{\alpha}^+(\grad d,D^2 d)\right.
\\
&&-\left.(k+1) {\cal M}_{\alpha}^-(\nabla d, D^2 d)\right)
+ C_h |\nabla \psi|^{1+\alpha} \\
 &\leq & {(Mk)^{\alpha+1}\over (1+d)^{k+2+ (k+1) \alpha}}[(1+ d)\Lambda C_1\sum|\partial_i d|^\alpha\\
 &&-(k+1)\lambda\sum|\partial_i d|^{\alpha+2}]
 + C_h {(Mk)^{\alpha+1} \over (1+d)^{(k+1)(1+\alpha)}}\\ 
 &\leq&  {(Mk)^{\alpha+1}\over (1+d)^{k+2+ (k+1) \alpha}}(2N\Lambda C_1-\lambda(k+1)N^{-\frac{1}{2+\alpha}})\\
&& +C_h {(Mk)^{\alpha+1}\over (1+d)^{(k+1)(1+\alpha)}}\\
 &\leq &  -{(k+1)\lambda N^{-\frac{1}{2+\alpha}}(Mk)^{\alpha+1}\over 4(1+d)^{k+2+ (k+1) \alpha}}. 
\end{eqnarray*}

It is clear that one can choose $M$ large enough as soon as $k$ is fixed as above in order that 
$$ F(x, \grad\psi,D^2\psi)+ h(x, \nabla \psi)\leq -\|f\|_\infty.$$
A similar computation leads to:
$$F(x, \grad(-\psi),D^2(-\psi)) +h(x, \nabla -\psi)\geq \|f\|_\infty.$$

\section{The strong Maximum Principle}

\begin{theo}
Suppose that $u$ is a supersolution  of the 
equation $F(x, \nabla u, D^2 u) \leq 0$ in a domain $\Omega $  and that $u\geq 0$. 
Then either $u>0$ in $\Omega$ or $u\equiv 0$.
\end{theo}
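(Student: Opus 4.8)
The plan is to establish the strong maximum principle by a local argument near a hypothetical zero of $u$, using a carefully chosen barrier that exploits the degenerate ellipticity structure encoded in (\ref{degenel}). Suppose $u\geq 0$ is a supersolution of $F(x,\nabla u,D^2u)\leq 0$ and $u$ is not identically zero; we want to show $u>0$ everywhere. By a connectedness argument, it suffices to show that the set $\{u>0\}$ is both open (trivially, since $u$ is LSC as a supersolution) and closed in $\Omega$; equivalently, if $u(x_0)=0$ for some $x_0\in\Omega$ and $u>0$ on a ball touching $x_0$, we derive a contradiction. So the heart of the matter is the following Hopf-type statement: if $u\geq 0$ is a supersolution on an annulus $B_R(y)\setminus \overline{B_{R/2}(y)}$, with $u>0$ on the inner sphere $\partial B_{R/2}(y)$ and $u(x_0)=0$ for some $x_0\in\partial B_R(y)$, then we reach a contradiction.

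First I would construct the comparison function. The natural candidate is $w(x) = c\,(e^{-\beta |x-y|^2} - e^{-\beta R^2})$ for suitable constants $c,\beta>0$, which vanishes on $\partial B_R(y)$ and is positive inside. One computes $\nabla w = -2\beta c\, e^{-\beta|x-y|^2}(x-y)$ and $D^2 w = 2\beta c\, e^{-\beta|x-y|^2}\big(2\beta (x-y)\otimes(x-y) - \mathrm{I}\big)$. On the annulus, away from the center, $\nabla w\neq 0$, so $\Theta_\alpha(\nabla w)$ is a genuine (invertible) diagonal matrix and (\ref{degenel}) applies: using $F(x,q,0)=0$ and the two-sided bound, $F(x,\nabla w,D^2w)$ is controlled above by $\Lambda\, \mathrm{tr}(\Theta_\alpha(\nabla w)(D^2w)^+\Theta_\alpha(\nabla w)) - \lambda\, \mathrm{tr}(\Theta_\alpha(\nabla w)(D^2w)^-\Theta_\alpha(\nabla w))$. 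The positive part of $D^2w$ comes from the rank-one term $2\beta(x-y)\otimes(x-y)$ which, after conjugation by $\Theta_\alpha(\nabla w)$, contributes a term of order $\beta^2$ times $|x-y|^2$ times appropriate powers of $|\nabla w|$ — all of which scale like $\beta^{\alpha+2}$ after accounting for the $|\nabla w|^\alpha\sim\beta^\alpha$ factor — while the negative part $-\mathrm{I}$ contributes $-\lambda\,\mathrm{tr}(\Theta_\alpha(\nabla w)^2)$, negative of order $\beta^{\alpha+1}$. The key point, as in the classical Hopf computation, is that on the annulus $\{R/2\leq |x-y|\leq R\}$ one can choose $\beta$ large enough (depending on $\lambda,\Lambda,\alpha,N,R$) so that the negative $\lambda$-term dominates, giving $F(x,\nabla w,D^2w)\leq 0$, i.e. $w$ is a classical subsolution.

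Next I would fix $c$ small enough that $w\leq u$ on $\partial B_{R/2}(y)$ (possible since $u>0$ there by lower semicontinuity and compactness, while $w$ is bounded) and note $w=0\leq u$ on $\partial B_R(y)$. The comparison principle — here one must be slightly careful since $F$ alone need not satisfy strict monotonicity in $u$; but $w$ is a classical strict subsolution on the annulus (one can arrange $F(x,\nabla w,D^2w)<0$ strictly by taking $\beta$ slightly larger), and then the standard viscosity comparison between a strict classical subsolution and a supersolution, using Lemma \ref{lem2} / Ishii's lemma as in Theorem \ref{comp}, applies — yields $w\leq u$ throughout the annulus. Finally, evaluating at $x_0\in\partial B_R(y)$ where $u(x_0)=0=w(x_0)$ and comparing the derivatives (or rather using that $u-w\geq 0$ attains its minimum value $0$ at the boundary point $x_0$, while $w$ has a strictly negative inward normal derivative), one obtains that the inward normal derivative of $u$ at $x_0$ is strictly positive, contradicting $u(x_0)=0$, $u\geq 0$ — or more elementarily, one can place $x_0$ in the \emph{interior} of $\Omega$ by the connectedness setup, in which case $u-w\geq 0$ has an interior minimum at $x_0$ with value $0$, so $(0, D^2w(x_0)+P)\in \overline{J}^{2,-}u(x_0)$ for $P\geq 0$ arbitrarily small, forcing $F(x_0,0,\cdot)$-type inequalities that contradict $F(x_0,\nabla w(x_0), D^2w(x_0))<0$ after a limiting argument — I would phrase it through the interior-point version to avoid boundary regularity issues.

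\textbf{The main obstacle} I anticipate is the degeneracy of $\Theta_\alpha$ when $\alpha>0$: unlike the uniformly elliptic case, the conjugated Hessian $\Theta_\alpha(\nabla w)D^2w\,\Theta_\alpha(\nabla w)$ involves the anisotropic weights $|\partial_i w|^{\alpha}$, and one must verify that on the annulus these weights stay bounded above and below (they do, since $|\partial_i w| = 2\beta c\,e^{-\beta|x-y|^2}|x_i-y_i|$ and $|x-y|$ is bounded away from both $0$ and $\infty$, though individual components $x_i-y_i$ may vanish — so some $\Theta_{ii}$ may be zero). When some $\partial_i w = 0$, the corresponding row/column of the conjugated matrix vanishes, which only \emph{helps} the sign of the relevant traces (it kills both a potential positive contribution and, one must check, does not destroy the needed negative contribution since at least one component $x_i - y_i$ is comparable to $|x-y|$). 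Making this bookkeeping rigorous — essentially showing the Hopf barrier computation survives the anisotropic degeneracy — is the one genuinely delicate point; everything else is a routine adaptation of the classical Vázquez-type strong maximum principle to the viscosity/comparison framework already developed in the paper.
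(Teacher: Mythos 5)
Your overall strategy matches the paper's: reduce to a Hopf boundary-point statement, build an exponential barrier on an annulus (you use $e^{-\beta r^2}$ where the paper uses $e^{-cr}$, both workable), compare, and contradict the supersolution property at the touching point. But there is a systematic sign confusion that, as written, breaks the proof. With the paper's convention (H1), $F$ is \emph{nondecreasing} in the Hessian; a supersolution satisfies $F\leq 0$ and a subsolution $F\geq 0$. To conclude $w\leq u$ on the annulus from comparison with the supersolution $u$, you need $w$ to be a \emph{sub}solution, i.e.\ $F(x,\nabla w,D^2w)\geq 0$, in fact strictly $>0$. Your own scaling count already shows the way: the conjugated rank-one piece from $x\otimes x$ scales like $\beta^{\alpha+2}$ and the conjugated $-\mathrm{I}$ piece like $\beta^{\alpha+1}$, so for $\beta$ \emph{large} the positive term dominates, and one obtains
$$F(x,\nabla w,D^2w)\ \geq\ {\cal M}_\alpha^-(\nabla w,D^2w)\ =\ \lambda\,\mathrm{tr}\big((\Theta D^2 w\,\Theta)^+\big)-\Lambda\,\mathrm{tr}\big((\Theta D^2 w\,\Theta)^-\big)\ >\ 0.$$
You instead assert that ``the negative $\lambda$-term dominates'' and that this yields ``$F(x,\nabla w,D^2w)\leq 0$, i.e.\ $w$ is a classical subsolution'' — both the direction of domination and the inequality that defines a subsolution are flipped here.

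The final contradiction step inherits the same confusion. Since $w$ touches $u$ from below at the interior point $x_0$ and $w$ is ${\cal C}^2$ near $x_0$, the correct subjet element is $\big(\nabla w(x_0),D^2 w(x_0)\big)\in \bar J^{2,-}u(x_0)$ — not $(0,D^2w(x_0)+P)$: the gradient slot is filled by the test function's gradient, and $\nabla w(x_0)\neq 0$ on $\partial B_R(y)$. The supersolution inequality then gives $F\big(x_0,\nabla w(x_0),D^2w(x_0)\big)\leq 0$, which contradicts the computed lower bound $>0$ — you wrote ``$<0$'' as the barrier estimate, which is again the wrong sign. Correcting these signs would make the argument essentially equivalent to the paper's; the delicate degeneracy point you flag (what happens when some $\partial_i w=0$) is indeed where the paper does its detailed eigenvalue bookkeeping via the vectors $\vec i$ and $\vec j$, and your heuristic that vanishing components ``only help'' is essentially right, but it would need to be carried out, not merely asserted, for a complete proof.
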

\begin{proof} 
One can suppose that $u>0$, on $B(x_1, |x_1-x_o|) $, $u(x_o)=0$, $R = |x_1-x_o|$ and we can assume  that  the annulus  $ {R\over 2} \leq |x-x_1| \leq {3R\over 2}$ is included in $\Omega$.  Let $w$  be defined as $$w(x) = m(e^{-c|x-x_1|}-e^{-cR})$$
for some $c$ and $m$ to be chosen. Without loss of generality we will suppose that $x_1=0$  and denote
$r:=|x-x_1|= |x|$.  We choose $m$ so that on $r= {R\over 2}$, $w \leq u$.   
In the sequel  for simplicity we replace $m$ by $1$. 
                           
One has
$$\nabla w = {-c x\over r} e^{-cr},\  D^2 w =e^{-cr}  ({c^2 \over r^2}+ {c\over r^3} ) (x\otimes  x  ) -{c\over r} {\rm I}$$
and then, using the usual notation $\Theta(\nabla w) $, 
$
H:= \Theta(\nabla w) D^2 w \Theta(\nabla w) $, 
i.e.                            
$$
He^{c(\alpha+1) r}
=\left( {c\over r}\right)^\alpha \left( ({c^2 \over r^2}+ {c\over r^3} )\vec i\otimes \vec i-{c\over r} \vec j\otimes \vec j\right)
                     $$
where  $\vec i = \sum |x_i|^{\alpha \over 2} x_i e_i$ and $\vec j =  \sum |x_i|^{\alpha \over 2} e_i$.
                              
We need to evaluate the eigenvalues of $H$ and in particular prove that 
                               $$ {\cal M} ^-(H) > 0.$$
For that aim let us note that $(\vec i, \vec j)^\perp$ is in the kernel of $H$. 
We  introduce $ a= {c^2\over r^2}+ {c\over r^3}$ and $b = -{c\over r}$. Then the 
non zero eigenvalues 
of ${H c^{- \alpha} e^{cr (1+ \alpha)}}$ are given by 
 $$\mu^{\pm} = {a|\vec i|^2 + b |\vec j|^2 \over 2} \pm \sqrt{\left({a|\vec i|^2 + b |\vec j|^2 \over 2}\right)^2 - ab (|\vec i|^2 |\vec j|^2 -(\vec i \cdot \vec j)^2)}.$$
 Note that there exist constants $c_i(N, \alpha)$ for $i=1,\cdots 4$,  such that 
$$ c_1(N, \alpha) \left({R\over 2}\right) ^{\alpha+2}\leq c_1(N, \alpha) r^{\alpha+2} \leq |\vec i|^2\leq c_2 (N, \alpha) r^{\alpha+2} \leq  c_2 (N, \alpha) \left({3R\over 2}\right)^{\alpha+2} $$

and  
$$ c_3(N, \alpha) \left({R\over 2}\right) ^{\alpha}\leq c_3(N, \alpha) r^{\alpha} \leq |\vec j|^2\leq c_4 (N, \alpha) r^{\alpha } \leq  c_4 (N, \alpha) \left({3R\over 2}\right)^{\alpha}. 
$$
Note that one can choose $c$ large enough in order that  for some constant $c_5(N, \alpha)$

\begin{eqnarray*}
                               a|\vec i|^2 +  b |\vec j|^2 &\geq&  c_1(N, \alpha) \left({R\over 2}\right) ^{\alpha+2}{c^2\over r^2} -  c_4 (N, \alpha) \left({3R\over 2}\right)^{\alpha} {c\over r}\\
                               & \geq &c_5 (N, \alpha) c^2.
\end{eqnarray*}
On the other hand one can assume $c$ large enough in order that  
\begin{eqnarray*}
 4|ab| (|\vec i|^2 |\vec j|^2 -(\vec i \cdot \vec j)^2) &\leq& 4{c^3\over r^2}c_2(N, \alpha) c_4(N, \alpha) \left({3R\over 2}\right)^{2\alpha+2} \\
 &\leq & c_6(N, \alpha) c^3 \\
 &<  &[\left({\lambda+ \Lambda \over \Lambda-\lambda}\right)^2-1]( c_5 (N, \alpha) c^2))^2\\
 &\leq  &[\left({\lambda+ \Lambda \over \Lambda-\lambda}\right)^2-1]\left(  a|\vec i|^2 +  b |\vec j|^2\right)^2.
 \end{eqnarray*}
 In particular this implies 
 \begin{eqnarray*}
\lambda \mu^++\Lambda \mu^- &= &( {{a|\vec i|^2 +  b |\vec j|^2}\over 2} ) \left((\lambda + \Lambda) +( \lambda - \Lambda)   \sqrt{1+ 4 {|ab| (|\vec i|^2 |\vec j|^2 -(\vec i \cdot \vec j)^2) \over (a|\vec i|^2 +  b |\vec j|^2)^2}}\right)>0\\
                               \end{eqnarray*}
i.e.
${\cal M}^-(H) >0.$
Using the comparison principle in the annulus $\{ \frac{R}{2} \leq |x-x_1| \leq {3R\over 2}\}$ one 
obtains that $u\geq w$. 

Observe   that  $w$ touches $u$ 
by below on $x_o$, and then,  since $w$ is ${\cal C}^2$  around $x_o$,  by the definition of viscosity solution
$$F(x_o, \nabla w(x_o), D^2 w(x_o)) \leq 0.$$
This contradicts the above computation. 
\end{proof}

 \begin{rema}
As it is well known, the above proof can be used to see that on a point of the boundary where the interior sphere condition is satisfied, the Hopf principle holds. 
  \end{rema}

 \section{Appendix: Proof of Lemma \ref{lem2}}
The proof of Lemma \ref{lem2} is based on the following Lemma by Ishii
\begin{lemme}[Ishii]\label{lem1}
Let $A$ be  a symmetric matrix on $\R^{2N}$. Suppose that 
$U \in USC (\R^N)$ and $V\in USC (\R^N)$ satisfy 
$U(0)= V(0)$ and, for all $(x,y)\in( \R^N)^2$,
$$U(x)+ V(y) \leq {1\over 2} (^tx,^ty)A \left(\begin{array}{c}
         x\\
         y\end{array}\right).$$
Then, for all $\iota>0$, there exist $X^U_\iota \in S$, $X^V_\iota \in S$ such that 
          $$(0, X^U_ \iota) \in \bar J^{2,+} U(0), \ (0, X^V_\iota)\in \bar J^{2,+} V(0)$$
and 
$$-({1\over \iota} + |A|) \left(\begin{array}{cc}
           I &0\\
           0&I\end{array}\right) \leq \left(\begin{array}{cc}
           X^U_\iota&0\\
           0& X^V_\iota
           \end{array}\right)\leq (A+\iota A^2).
$$
           \end{lemme}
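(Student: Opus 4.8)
\noindent This is the classical Jensen--Ishii ``lemma on sums'', so I would reproduce the by-now standard argument. Write $z=(x,y)\in\R^{2N}$ and, after the harmless normalization $U(0)=V(0)=0$, note that the hypothesis says exactly that $z=0$ is a global maximum of the USC function $W(z):=U(x)+V(y)-\tfrac12\langle Az,z\rangle\le0$. The plan has three stages: (i) regularize $U,V$ by sup-convolution to reduce to \emph{semiconvex} data; (ii) prove the semiconvex version by Jensen's lemma and Alexandrov's theorem; (iii) undo the regularization while tracking the error terms. For stage (i): for small $\eta>0$ put $U^\eta(x)=\sup_\xi\big(U(\xi)-\tfrac1{2\eta}|x-\xi|^2\big)$, and likewise $V^\eta$. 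Setting $y=0$ in the hypothesis shows $U$ (and, symmetrically, $V$) is bounded above by a quadratic, so $U^\eta,V^\eta$ are finite, semiconvex ($U^\eta+\tfrac1{2\eta}|\cdot|^2$ convex), and decrease to $U,V$. I would record the \emph{magic property}: if $(p,X)\in J^{2,+}U^\eta(x)$ then $(p,X)\in\bar J^{2,+}U(x+\eta p)$, and symmetrically for $V$. Finally $W^\eta(z):=U^\eta(x)+V^\eta(y)-\tfrac12\langle Az,z\rangle$, after a harmless localization or an extra small penalization guaranteeing attainment, has a maximum at some $z_\eta=(x_\eta,y_\eta)$ with $z_\eta\to0$ and $U^\eta(x_\eta)+V^\eta(y_\eta)\to0$ as $\eta\to0$ (using $W^\eta\ge W$ and upper semicontinuity).

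\medskip\noindent Stage (ii): fix $\eta$, and add $-\beta|z-z_\eta|^2$ so that $z_\eta$ is a \emph{strict} maximum of the semiconvex function $\Psi(z):=U^\eta(x)+V^\eta(y)-\tfrac12\langle Az,z\rangle-\beta|z-z_\eta|^2$. Jensen's lemma produces a positive-measure set of small $q\in\R^{2N}$ for which $z\mapsto\Psi(z)+\langle q,z\rangle$ has a local maximum at some $z_q$ near $z_\eta$; by Alexandrov's theorem I may moreover require that $U^\eta$ and $V^\eta$ are twice differentiable at such $z_q$. The first-order condition identifies the gradients $p^U_q=DU^\eta(x_q)$, $p^V_q=DV^\eta(y_q)$, and the second-order condition, since the Hessian of $\Psi$ is $\mathrm{diag}(D^2U^\eta(x_q),D^2V^\eta(y_q))-A-2\beta I$, reads $\mathrm{diag}(D^2U^\eta(x_q),D^2V^\eta(y_q))\le A+2\beta I$; semiconvexity supplies the matching lower bound $\ge-\tfrac1\eta I$. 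These uniform two-sided bounds give compactness: letting $q\to0$ and then $\beta\to0$ (the strict maximum forces $z_q\to z_\eta$), I extract $X^\eta,Y^\eta$ with $(DU^\eta(x_\eta),X^\eta)\in\bar J^{2,+}U^\eta(x_\eta)$, $(DV^\eta(y_\eta),Y^\eta)\in\bar J^{2,+}V^\eta(y_\eta)$, and $-\tfrac1\eta I\le\mathrm{diag}(X^\eta,Y^\eta)\le A$.

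\medskip\noindent Stage (iii): the magic property upgrades these to $(DU^\eta(x_\eta),X^\eta)\in\bar J^{2,+}U(x_\eta+\eta\,DU^\eta(x_\eta))$ and the analogous statement for $V$, with base points still tending to $0$. The point producing $A+\iota A^2$ in place of merely $A$ is that sup-convolution by $\tfrac1{2\eta}|\cdot|^2$ turns a quadratic test function of Hessian $A$ into one of \emph{effective Hessian} $A(I-\eta A)^{-1}$; carried through stage (ii) this sharpens the upper bound to $\mathrm{diag}(X^\eta,Y^\eta)\le A(I-\eta A)^{-1}$, and since $A(I-\eta A)^{-1}\le A+\iota A^2$ once $\eta\|A\|<1$ is small enough relative to $\iota$, choosing $\eta=\eta(\iota)$ gives the stated upper bound, while $-(\tfrac1\iota+\|A\|)I\le\mathrm{diag}(X,Y)$ comes from the semiconvexity bound $-\tfrac1\eta I$ together with the contribution of the quadratic part of $W^\eta$. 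Since all bounds are uniform in $\eta$, I finally let $\eta\to0$, extract $X^\eta\to X^U_\iota$ and $Y^\eta\to X^V_\iota$ along a subsequence, and pass the jet inclusions to the limit using the closedness of $\bar J^{2,+}$ and the facts $U(x_\eta+\eta\,DU^\eta(x_\eta))\to U(0)$, $V(y_\eta+\eta\,DV^\eta(y_\eta))\to V(0)$ (upper semicontinuity together with $U^\eta(x_\eta)+V^\eta(y_\eta)\to0$).

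\medskip\noindent The main obstacle is twofold. The genuinely non-elementary ingredient is the measure-theoretic heart of stage (ii): Jensen's lemma and Alexandrov's a.e.\ twice differentiability of semiconvex functions, which is what actually manufactures the Hessians $X,Y$ out of merely USC data. The other delicate part is the bookkeeping in stages (i) and (iii) --- controlling how the maximizer $z_\eta$, the value $U^\eta(x_\eta)+V^\eta(y_\eta)$, and above all the effective Hessian $A(I-\eta A)^{-1}$ of the quadratic behave under the sup-convolution, and securing the attainment of $\max W^\eta$ --- so that the accumulated errors collapse to exactly $A+\iota A^2$ and $-(\tfrac1\iota+\|A\|)I$.
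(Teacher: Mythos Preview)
The paper does not prove this lemma at all: it is stated as ``Lemma by Ishii'' with attribution to \cite{I1}, and is immediately used as a black box to prove Lemma~\ref{lem2}. So there is no paper-proof to compare against; your sketch is essentially the standard Crandall--Ishii--Lions argument and is the right thing to write.

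One point in your stage~(iii) is muddled and, as written, would not close. You say both ``choosing $\eta=\eta(\iota)$'' and ``finally let $\eta\to0$''; these are in tension, and the latter would blow up the lower bound $-\tfrac1\eta I$. The clean way is to \emph{fix} $\eta=\iota/(1+\iota|A|)$ once and for all. With this choice, first note that the sup-convoluted inequality reads $U^\eta(x)+V^\eta(y)\le\tfrac12\langle A_\eta z,z\rangle$ with $A_\eta:=A(I-\eta A)^{-1}$, and equality still holds at $z=0$ (because $U^\eta(0)\ge U(0)=0$, $V^\eta(0)\ge V(0)=0$, while the right-hand side vanishes). Hence the maximum of $W^\eta$ taken with $A_\eta$ (not $A$) is exactly at $0$, and no drifting maximizer $z_\eta$ is needed. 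Stage~(ii) then yields $(0,X^\eta)\in\bar J^{2,+}U^\eta(0)$, $(0,Y^\eta)\in\bar J^{2,+}V^\eta(0)$ with $-\tfrac1\eta I\le\mathrm{diag}(X^\eta,Y^\eta)\le A_\eta$. The magic property, applied along the approximating sequences implicit in $\bar J^{2,+}U^\eta(0)$, transfers these directly to $(0,X^\eta)\in\bar J^{2,+}U(0)$ and $(0,Y^\eta)\in\bar J^{2,+}V(0)$ with no further limit in $\eta$. Finally, the specific choice of $\eta$ gives $\tfrac1\eta=\tfrac1\iota+|A|$ (the lower bound) and a short computation shows $A_\eta\le A+\iota A^2$ (the upper bound). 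Your identification of Jensen's lemma and Alexandrov's theorem as the non-elementary core is exactly right.
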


We can now start the proof of Lemma \ref{lem2}. 
The second order Taylor's expansion  for $\Phi$,  gives that for all 
$\epsilon  >0$  there exists   $r>0$ such that, for $|x-\bar x| ^2 + |\bar y -y|^2 \leq r^2$,
\begin{eqnarray*}
 u(x)-u(\bar x)&-&\langle D_1\Phi(\bar x, \bar y)+2M(\bar x-x_o), x-\bar x\rangle+\\
+v(\bar y)-v(y) & -& \langle D_2 \Phi(\bar x, \bar y)+2M(\bar y-x_o)  , y-\bar y\rangle  \\
 &\leq &{1\over 2} \left(^t (x-\bar x), ^t (y-\bar y) \right)(D^2 \Phi(\bar x, \bar y)+\epsilon  {\rm I} )  \left( \begin{array}{c} x-\bar x\\
                y-\bar y\end{array}\right)\\
                && +M   (|x-\bar x|^2 + |y-\bar  y|^2). 
                \end{eqnarray*}
We now introduce the functions $U$ and $V$ defined, in the  closed ball  $|x-\bar x| ^2 + |\bar y -y|^2 \leq r^2$,  by 
$$U(x) = u(x+ \bar x)-\langle D_1\Phi(\bar x, \bar y)+2M(\bar x-x_o), x\rangle -u(\bar x) -M |x|^2$$
and
$$ V(y) = -v(y+ \bar y) - \langle D_2 \Phi(\bar x, \bar y)+2M(\bar y-x_o)  , y\rangle + v(\bar y)-M |y|^2 $$
which we extend  by some
convenient negative constants   in the complementary of that ball (see  \cite{I1} for details). Observe first that 
                
$$ (0, X^U) \in \overline{J}^{2,+} U(0), \ (0, X^V ) \in \overline{J}^{2,-} V(0)$$ is equivalent to 
                
$$(D_1 \Phi(\bar x, \bar y) + 2M (\bar x-x_o), X^U + 2M{\rm I}) \in  \overline{J}^{2,+} u(\bar x)$$
and 

$$ (-D_2 \Phi (\bar  x, \bar y)-2M (\bar y-x_o), -X^V - 2M{\rm I} )\in \overline{J}^{2,-} v(\bar y).$$
We can apply Lemma \ref{lem1}, which gives that, for any $\iota>0$, there exists 
$(X_\iota, Y_\iota) $ such that 
                  
$$(D_1 \Phi(\bar x, \bar y)+2M(\bar x-x_o), X_\iota )\in \bar J^{2,+} u(\bar x)$$
and
$$(-D_2 \Phi(\bar x, \bar y) -2M (\bar y-x_o), -Y_\iota ) \in \bar J^{2,-}v(\bar y)$$
Choosing $\epsilon $ such that  $2 \epsilon  \iota |D^2 \Phi(\bar x, \bar y)| +  \epsilon+ \iota (\epsilon )^2 < 1$, one gets     
\begin{eqnarray*}
-({1\over \iota} + |D^2 \Phi|+1 ) \left(\begin{array}{cc}
           I &0\\
           0&I\end{array}\right) &\leq& \left(\begin{array}{cc}
           X_\iota-2M{\rm I}&0\\
           0& Y_\iota-2M {\rm I}
           \end{array}\right)\\
           &\leq& (D^2 \Phi+\iota (D^2 \Phi)^2)+  \left(\begin{array}{cc}
           I &0\\
           0&I\end{array}\right)  .
\end{eqnarray*}

  \end{document}